\newtheorem{theorem}{Theorem}[section]
\newtheorem{proposition}[theorem]{Proposition}
\newtheorem{corollary}[theorem]{Corollary}
\newtheorem{conjecture}{Conjecture}
\newtheorem*{conjecture*}{Conjecture}
\theoremstyle{definition}
\newtheorem{definition}[theorem]{Definition}
\newtheorem{example}[theorem]{Example}
\newtheorem{remark}[theorem]{Remark}
\numberwithin{equation}{section}
\newcommand{\zre}{\mathfrak{R}\mspace{1mu}}
\newcommand{\zdu}{\mathfrak{D}\mspace{1mu}}
\newcommand{\Z}{\mathfrak{Z}}
\newcommand{\I}{\mathfrak{I}}
\title{Zeon and Idem-Clifford Formulations of Hypergraph Problems}
\author{Samuel Ewing\footnote{Email: samuel.ewing@outlook.com}\, and G. Stacey Staples\footnote{Corresponding author.  Email: sstaple@siue.edu}}
\affil{Department of Mathematics \& Statistics\\
 Southern Illinois University Edwardsville\\
 Edwardsville, IL 62026-1653, USA}
\date{}
\begin{document}

\maketitle

\date{}

\begin{abstract}
Zeon algebras have proven to be useful for enumerating structures in graphs, such as paths, trails, cycles, matchings, cliques, and independent sets.  In contrast to an ordinary graph, in which each edge connects exactly two vertices, an edge (or, ``hyperedge'') can join any number of vertices in a hypergraph.  In game theory, hypergraphs are called simple games.  Hypergraphs have been used for problems in biology, chemistry, image processing, wireless networks, and more.  
In the current work, zeon (``nil-Clifford'') and ``idem-Clifford'' graph-theoretic methods are generalized to hypergraphs.  In particular, zeon and idem-Clifford methods are used to enumerate paths, trails, independent sets, cliques, and matchings in hypergraphs.  An approach for finding minimum hypergraph transversals is developed, and zeon formulations of some open hypergraph problems are presented.
\\
MSC: Primary 15B33, 15A09, 05C50, 05E15, 81R05
keywords: Cycle, Game, Hypergraph, Path, Transversal, Zeon
\end{abstract}

\section{Introduction}
While graphs have proven to be useful models for many real-world problems, edges are limited to modeling pairwise relations.  Hypergraphs have proven to be useful models for problems where pairwise relations are not sufficient as hypergraphs allow for edges to contain more than two vertices and hence are able to model more complicated relationships among a collection of objects.  Hypergraphs have been used for problems in
biology~\cite{Klamt2009}, chemistry~\cite{Elena2001}, image processing~\cite{Bretto2004}, wireless networks~\cite{Zhang2018}, and more.  Many problems modeled by hypergraphs can be resolved by finding particular structures within a hypergraph.  

Quite recently, hypergraphs have been studied for their relevance in machine learning.  Neural networks are considered in \cite{Feng2019}, random walks and their applications to image segmentation are considered in \cite{Ducournau}.  Hypergraph learning is applied to social media networks in \cite{Fang}, while hypergraph partitioning is applied to document clustering in \cite{Hu}.  Biological applications include cancer outcome prediction~\cite{Hwang}.

This paper offers a new approach to revealing some structures in a hypergraph via zeon and idem-Clifford algebras.  Combinatorial properties of zeon algebras have proven to be useful for solving many graph-theoretic problems \cite{Staples2008}, \cite{Staples2019}, \cite{StaplesStellhorn2017}.  In previous works, zeons have been used to enumerate paths, cycles, matchings, and cliques in (ordinary) graphs.  We extend these results to the hypergraph setting and present a new approach to finding minimum transversals of a hypergraph, an important problem with many applications in computer science \cite{Eiter2002,Eiter1995}.  The approach lends itself to symbolic computation; examples herein have been computed using {\em Mathematica}.  

The paper is laid out as follows.  Necessary terminology regarding graphs and hypergraphs is given in Section \ref{prelims}, along with definitions of essential algebraic structures; i.e., zeon,  idem-Clifford, and generalized zeon algebras.  Section \ref{enumeration problems} discusses applications of zeon and idem-Clifford algebras to enumeration problems on graphs and hypergraphs.  In particular, we extend results for paths, cliques and independent sets, and matchings to the hypergraph setting.  We also provide an idem-Clifford approach to finding minimum hypergraph transversals in this section.  In Section \ref{zeon formulations}, some open hypergraph problems are reformulated in terms of zeon and idem-Clifford algebras.  The paper closes with concluding remarks and potential future work in Section \ref{conclusion}.

\section{Preliminaries}\label{prelims}

To begin, we provide a brief overview of the necessary graph and hypergraph terminology.  

\subsection{Basic Graph and Hypergraph Theory}

The following definitions can be found in \cite{Zhang2018}, but it should be noted that hypergraph terminology is far from standardized and there are many ways to generalize the various properties of graphs.

\begin{definition}
A {\em hypergraph} $H=(V,E)$ is a set $V$, whose elements are called {\em vertices}, together with a set $E=\{e_1,e_2,\dots,e_m\}$ of nonempty subsets\footnote{A hypergraph whose edges consist of vertex pairs is commonly called a {\em graph}. 
} of $V$ called {\em hyperedges}.   
\end{definition}

A hyperedge containing a single vertex is called a {\em loop}.  A hyperedge which is a subset of another hyperedge is said to be {\em included}.  A hypergraph containing no included hyperedges and no loops is said to be {\em simple}.  

A hypergraph is {\em finite} when $V$ and $E$ are finite sets.  The results that follow will not require the hypergraph to be simple, but all hypergraphs will be considered finite unless stated otherwise.

Two vertices $v_i,v_j \in V$ are {\em adjacent} if there is at least one hyperedge containing both $v_i$ and $v_j$.  If a hyperedge $e$ contains vertex $v$, we say that they are {\em incident} to each other.  The {\em degree} of a vertex is the number of hyperedges incident to the vertex.

A matrix is often useful to represent incidence relations in a hypergraph.

\begin{definition}
The {\em incidence matrix} of a hypergraph $H$ is the matrix $C$ whose rows represent vertices of $H$ and whose columns represent hyperedges of $H$ with entries defined by 
\begin{equation*}
C_{ij}= \begin{cases}
    1 &\text{  if  } v_i \in e_j \\
    0 &\text{otherwise.}
\end{cases}
\end{equation*}
\end{definition}

\begin{figure}
\centering
\begin{minipage}[b]{.45\textwidth}
\includegraphics[width=\textwidth]{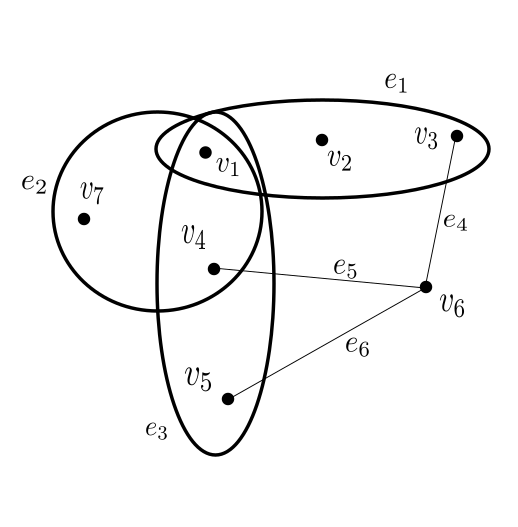}
\caption{A hypergraph $H$\label{first}}
\end{minipage}\hfill
\begin{minipage}[b]{.45\textwidth}
\centering
\small
\(
C = 
\begin{pmatrix}
    1 & 1 & 1 & 0 & 0 & 0 \\
    1 & 0 & 0 & 0 & 0 & 0 \\
    1 & 0 & 0 & 1 & 0 & 0 \\
    0 & 1 & 1 & 0 & 1 & 0 \\
    0 & 0 & 1 & 0 & 0 & 1 \\
    0 & 0 & 0 & 1 & 1 & 1 \\
    0 & 1 & 0 & 0 & 0 & 0 \\
\end{pmatrix}
\)
\caption{The incidence matrix of $H$}\label{second}
\end{minipage}
\end{figure}

It is often difficult to work with arbitrary hypergraphs given the differing sizes of hyperedges.  One may wish to force hyperedges to be a particular size.  A hypergraph is said to be {\em $r$-uniform} if each hyperedge in the hypergraph has cardinality $r$.  Note that ordinary simple graphs are $2$-uniform hypergraphs.

\subsection{Zeon and idem-Clifford Algebras}

In this section we define the algebraic structures we will use to model various hypergraph problems.  

\subsubsection{Zeon Algebras}
The {\em n-particle zeon algebra}, $\Z_n$, is the real abelian algebra generated by the set $\{\zeta_i : 1 \leq i \leq n\}$ along with the unit scalar $1=\zeta_{\emptyset}$ subject to the following multiplication rules:
\begin{align*}
\zeta_i\zeta_j &= \zeta_j\zeta_i \quad \text{for} \quad i \neq j,\\
\zeta_i^2 &= 0 \quad \text{for} \quad 1 \leq i \leq n.
\end{align*}

Let $[n]$ denote the $n$-set $\{1,2,\dots,n\}$ and denote the power set of $[n]$ by $2^{[n]}$. Let the {\em multi-index notation} $\zeta_I$  denote the product $$\zeta_I = \prod_{i \in I}\zeta_i$$ for any $I \in 2^{[n]}$.  The algebra $\Z_n$ is spanned by the collection $\{\zeta_I: I\in 2^{[n]}\}$ of {\em basis blades}.  Accordingly, each element $u \in \Z_n$ has a canonical expansion of the form $u = \sum_{I} u_I\zeta_I$,
where $u_I \in \mathbb{R}$ for each $I\in2^{[n]}$.

\begin{remark}
To simplify notation, generators will be denoted simply by $\zeta_i$ rather than $\zeta_{\{i\}}$.   This convention will be extended to all algebras introduced henceforth.
\end{remark}

It is often convenient to separate the scalar (grade-0) part of a zeon element from the rest of it.  Given $z\in\Z_n$, we write $\zre z=\langle z\rangle_0$ for the scalar part of $z$ and $\zdu z=z-\zre z$ for the dual part of $z$.

The {\em scalar sum} of $u\in\Z_n$ is defined to be the sum of scalar coefficients in $u$ with respect to the canonical basis; i.e., \begin{equation*}
\langle\langle u\rangle\rangle=\left<\left< \sum_I u_i\zeta_I \right>\right>=\sum_I u_I.
\end{equation*}

Zeon algebras have proven to be useful in enumeration problems on graphs where certain configurations are forbidden, such as in the enumeration of matchings and self-avoiding walks.  To perform such enumerations, one constructs algebraic representations of the graph using elements of a zeon algebra.  The nilpotent property of the generators can then be exploited to cancel unwanted configurations through multiplication.  It is also often useful to consider nilpotent zeons other than generators.  

\begin{definition}
Given a nilpotent element $u\in\Z_n$, the {\em degree of nilpotency} of $u$ is the smallest positive integer $\kappa(u)$ such that $u^{\kappa(u)}=0$.  \end{definition}

\begin{definition}
For a zeon $u\ne 0$, it is useful to define the {\em minimal grade} of $u$ by \begin{equation*}
\natural u =\begin{cases}
\min\left\{k\in \mathbb{N}: \langle \zdu u\rangle_k\ne 0\right\}& \zdu u\ne 0,\\
0&u=\zre u.
\end{cases}
\end{equation*}
Note that $\natural u=0$ if and only if $u$ is trivial.
\end{definition}

\subsubsection{Generalized Zeon Algebras}

Generalized zeons were introduced in \cite{SchottStaples2017}.   Observing that the sum of $k$ zeon generators is nilpotent of index $k+1$, e.g. $(\zeta_{1}+\zeta_{2}+\cdots+\zeta_{k})^k=k!\zeta_{\{1,2,\ldots, k\}}$ and $(\zeta_{1}+\zeta_{2}+\cdots+\zeta_{k})^{k+1}=0$, it is possible to construct nilpotent elements having any finite index of nilpotency within a zeon algebra of sufficiently high dimension.   

\begin{definition}
For positive integer $n$, let $\mathbf{s}=(s_1,\ldots, s_n)\in{\mathbb{N}}^n$ be an $n$-tuple of positive integers.  Then, the {\em zeon algebra of signature} $\mathbf{s}$ (or {\em $\mathbf{s}$-zeon algebra}), denoted here by $\Z_{\mathbf{s}}$, is the real associative algebra generated by the collection $\displaystyle\left\{\underset{s_i}{\nu_i}{}: 1\le i\le n\right\}$ along with the scalar $1=\nu_\varnothing$, subject to the following multiplication rules: \begin{itemize}
\item ${\underset{s_i}{\nu_i}{}}{\underset{s_j}{\nu_j}{}}={\underset{s_j}{\nu_j}{}}{\underset{s_i}{\nu_i}{}}$ for all $1\le i,j\le n$;
\item ${\underset{s_i}{\nu_i}{}}^{k}=0$ if and only if $k\ge s_i$;
\item ${\underset{s_i}{\nu_i}{}}{\underset{s_j}{\nu_j}{}}=0$ if and only $i=j$ and $s_i=2$.
\end{itemize}
\end{definition}

\begin{example}
The algebra $\Z_{(2,3,5)}$ has three generators: $\displaystyle\left\{\underset{2}{\nu_1}{}, \underset{3}{\nu_2}{}, \underset{5}{\nu_3}{} \right\}$.  These generators are pairwise commutative and each is nilpotent.  In particular,  $\underset{2}{\nu_1}{}^2=0$, $\underset{3}{\nu_2}{}^3=0$, and $\underset{5}{\nu_3}{}^5=0$.  General products are computed using distributivity and associativity.  For example, \begin{eqnarray*}
\left(\underset{3}{\nu_2}{}+2\underset{2}{\nu_1}{}\right)^2&=&\underset{3}{\nu_2}{}^2+4\underset{2}{\nu_1}{}\underset{3}{\nu_2}{},\\
\left(1-\underset{2}{\nu_1}{}+\underset{5}{\nu_3}{}^2\right)\underset{5}{\nu_3}{}^3&=&\underset{5}{\nu_3}{}^3-\underset{2}{\nu_1}\underset{5}{\nu_3}{}^3.
\end{eqnarray*}

\end{example}

\subsubsection{Idem-Clifford Algebras}

We can also consider algebras whose generators are idempotent rather than nilpotent.  While combinatorial properties of these algebras have not been explored as deeply, we will later show how the idempotent property of the generators can be used to avoid redundant information when enumerating certain graph and hypergraph structures.  These algebras have also been used in Boolean satisfiability problems in \cite{DavisStaples2019} and are defined as follows.

The {\em $n$-generator idem-Clifford algebra}, $\I_n$, is the real abelian algebra generated by the set $\{\varepsilon_i \mid 1 \leq i \leq n \}$ along with the scalar $1 = \varepsilon_{\emptyset}$ subject to the following multiplication rules:

\begin{itemize}
    \item[] \begin{center} $\varepsilon_i\varepsilon_j = \varepsilon_j\varepsilon_i \quad \text{for} \quad i \neq j$, and\end{center}
    \item[] \begin{center} ${\varepsilon_i}^2 = \varepsilon_i \quad \text{for} \quad 1 \leq i \leq n$.\end{center}
\end{itemize}
With multi-index notation in mind, each element $u \in \I_n$ has the expansion $$u = \sum_{I \in 2^{[n]}} u_I\varepsilon_I$$ where $u_I \in \mathbb{R}$ for each $I$.

\begin{example}
In $\I_6$, one finds that
\begin{eqnarray*}
(\varepsilon_{2}-4\,\varepsilon_{6})^2&=&\varepsilon_{2}-8\,\varepsilon_{\{2,6\}}+16\,\varepsilon_{6},\\
(3\,\varepsilon_{\{1,2\}}+\varepsilon_{3})(\varepsilon_{1}-2\,\varepsilon_{4})&=&3\,\varepsilon_{\{1,2\}}-6\,\varepsilon_{\{1,2,4\}}+\varepsilon_{\{1,3\}}-2\,\varepsilon_{\{3,4\}}.
\end{eqnarray*} 
\end{example}

\begin{remark}
The algebras $\Z_n$ and $\I_n$ have often been denoted by ${\mathcal{C}\ell_n}^{\rm nil}$ and ${\mathcal{C}\ell_n}^{\rm idem}$, respectively, to emphasize their relationship to Clifford algebras.  In particular, these algebras can be constructed as subalgebras of Clifford algebras of appropriate signature~\cite{DavisStaples2019,SchottStaples2017}.
\end{remark}

\subsection{Infinite-Dimensional Algebras}

Observing that $\Z_n$ is a subalgebra of $\Z_N$ for any $n\le N$ and that $\I_m$ is a subalgebra of $\I_M$ for $m\le M$, notation will be greatly simplified by referring only to algebras $\Z$ and $\I$ with the assumption that the number of generators is sufficient for matters at hand.  The notation $\Z_\mathbf{s}$ will denote a generalized zeon algebra of signature $\mathbf{s}$.

Since all graphs and hypergraphs considered here are finite, it is clear that the constructions of associated zeon, generalized zeon, and idem-Clifford elements can be achieved within algebras of sufficiently high dimension.

\section{Hypergraph Enumeration Problems}\label{enumeration problems}

Graph enumeration problems are problems involving counting structures in graphs such as various types of walks, matchings, cliques, independent sets, and more.  Zeon algebras have been used to count self-avoiding walks on graphs, independent sets and cliques, and matchings.  In this section we extend these results to the hypergraph setting.  An idem-Clifford approach to enumerating minimum hypergraph transversals (and as a consequence, minimum graph coverings) is also presented.

\subsection{Walks on Hypergraphs}
A {\em $k$-walk} in a graph $G$ is a sequence of vertices $(v_0,\dots,v_k)$ such that there exists an edge $\{v_j,v_{j+1}\} \in E$ for each $0 \leq j \leq k-1$.  We call $v_0$ the {\em initial vertex} and $v_k$ the {\em terminal vertex}.  A {\em self-avoiding walk} or {\em path} is a walk in which each vertex appears at most once.  A {\em closed} $k$-walk is a $k$-walk in which $v_0=v_k$.  A {\em k-cycle} is a closed $k$-path where we allow $v_0$ to be repeated exactly once as the terminal vertex.  A {\em k-trail} is a $k$-walk in which no edge is repeated.

Note that in the hypergraph case, vertices can be adjacent in any number of hyperedges so the definition of a hypergraph walk must also include information about the hyperedge set.  

\begin{definition}
A {\em $k$-walk} in a hypergraph $H=(V,E)$ is a sequence $\break\hfill$ $(v_0,e_1,v_1,\dots,v_{k-1},e_k,v_k)$ such that $v_0,\dots,v_k \in V$ and $e_1,\dots,e_k \in E$ have the property that $v_{j-1}$ is adjacent to $v_{j}$ in hyperedge $e_j$ for each $1 \leq j \leq k$.  
\end{definition}

The notions of paths, closed walks, cycles, and trails are the same as those in ordinary graphs.  

One traditional method for enumerating walks on graphs is to consider powers of the graph's adjacency matrix.  However, powers of the adjacency matrix fail to distinguish between walks and paths in a graph.  Nilpotent adjacency matrices were developed in \cite{Staples2008} in order to be able to make this distinction and enumerate $k$-paths in graphs. 

Note that while an adjacency matrix for a hypergraph could be constructed in a similar fashion as for ordinary graphs, such a matrix does not correspond to a unique hypergraph.  This is because the adjacency matrix does not account for the fact that vertices may be adjacent in multiple edges in the hypergraph case.  Due to this, many notions of a hypergraph adjacency matrix have been considered, as discussed in \cite{Ouvard2018}.  

\subsection{The hypergraph nilpotent adjacency matrix}

We aim to construct a matrix that distinguishes between walks and paths and also retains information about the hyperedge set of a walk.  To do this, we will first consider the bipartite graph representation of a hypergraph.  This representation is constructed as follows: for a hypergraph $H=(V,E)$, let $B_H$ be a bipartite graph whose vertex sets are $V$ and $E$.  Vertices $v$ and $e$ are adjacent in $B_H$ when $e$ is a hyperedge incident with $v$ in $H$.  The bipartite representation of the hypergraph $H$ from Figure \ref{first} is shown in Figure \ref{bipartite rep}.

\begin{figure}[h]
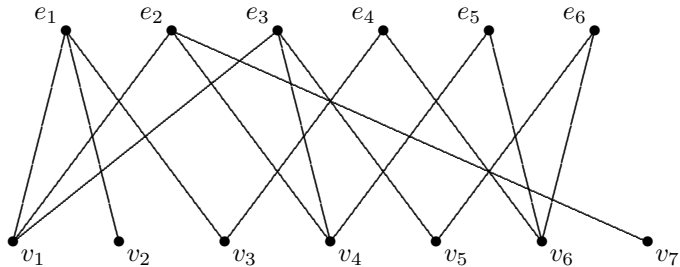

$$    \begindc{\undigraph}[200]
    \obj(14,1)[v7]{$v_7$}[\southeast]
    \obj(12,1)[v6]{$v_6$}[\southeast]
    \obj(10,1)[v5]{$v_5$}[\southeast]
    \obj(8,1)[v4]{$v_4$}[\southeast]
    \obj(6,1)[v3]{$v_3$}[\southeast]
    \obj(4,1)[v2]{$v_2$}[\southeast]
    \obj(2,1)[v1]{$v_1$}[\southeast]
    \obj(13,5)[e6]{$e_6$}[\northwest]
    \obj(11,5)[e5]{$e_5$}[\northwest]
 \obj(9,5)[e4]{$e_4$}[\northwest]
 \obj(7,5)[e3]{$e_3$}[\northwest]
 \obj(5,5)[e2]{$e_2$}[\northwest]
 \obj(3,5)[e1]{$e_1$}[\northwest]
 \mor{v1}{e1}{}
    \mor{v1}{e2}{}
    \mor{v1}{e3}{}
    \mor{v2}{e1}{}
    \mor{v3}{e1}{}
    \mor{v3}{e4}{}
    \mor{v4}{e2}{}
    \mor{v4}{e3}{}
    \mor{v4}{e5}{}
    \mor{v5}{e3}{}
    \mor{v5}{e6}{}
    \mor{v6}{e4}{}
    \mor{v6}{e5}{}
    \mor{v6}{e6}{}
    \mor{v7}{e2}{}
   \enddc
$$\caption{The bipartite representation of $H$.}\label{bipartite rep}
\end{figure}

One could apply the nilpotent adjacency matrix to this bipartite graph, however it is very restrictive in the sense that it allows for neither vertices nor hyperedges to be repeated in the corresponding hypergraph walk.  We can modify the nilpotent adjacency matrix to allow for repeated hyperedges by instead labeling their corresponding vertices in $B_H$ with idem-Clifford generators.  This allows one to enumerate walks in $H$ with no repeated vertices while still maintaining information about the hyperedge set in the walk. Let $H$ be a hypergraph with $n$ vertices and $m$ hyperedges.  Let $\{\varepsilon_1,\varepsilon_2,\dots,\varepsilon_m\}$ be generators of $\I_m$. We define the  $(n+m) \times (n+m)$ matrix $\mathcal{B}=(b_{ij})$ by

\begin{equation*}
b_{ij} = 
\begin{cases}
    \varepsilon_{j-n} & \text{  if  } 1 \leq i \leq n, \text{  } n+1 \leq j \leq n+m,  \text{ and $v_i,e_{j-n}$ incident in $H$}\\
    \zeta_j & \text{  if  } n+1 \leq i \leq n+m, \text{  }1 \leq j \leq n, \text{ and $v_{j},e_{i-n}$ incident in $H$}\\
    0 & \text{otherwise.}
\end{cases}
\end{equation*}

If one wishes to look for a $k$-walk in $H$, one could look for a corresponding $2k$-walk in $B_H$ by taking even powers of $\mathcal{B}$.  However, it is possible to reduce the size of $\mathcal{B}$ while still maintaining all vertex and hyperedge information.  First note that $\mathcal{B}$ is a block matrix consisting of zero blocks and modified incidence matrices of $H$.  Specifically,  if $C$ is the $n \times m$ incidence matrix of $H$, then $\mathcal{B}$ is of the form 

$$ \mathcal{B} = 
\begin{pmatrix}
    0
  & \vline & X
 \\
\hline
  Z & \vline &
  0
\end{pmatrix}
$$

where $X$ is the $n \times m$ matrix \begin{equation*}
X = C 
  \begin{pmatrix}
    \varepsilon_{1} & 0 & \dots & 0 \\
    0 & \varepsilon_{2} & \dots & 0 \\
    \vdots & \vdots & \ddots & \vdots \\
    0 & 0 & \dots & \varepsilon_{m}
  \end{pmatrix}
  \end{equation*}
and $Z$ is the $m \times n$ matrix \begin{equation*}
Z = C^{T} 
  \begin{pmatrix}
    \zeta_{1} & 0 & \dots & 0 \\
    0 & \zeta_{2} & \dots & 0 \\
    \vdots & \vdots & \ddots & \vdots \\
    0 & 0 & \dots & \zeta_{n}
  \end{pmatrix}.
\end{equation*}
Now note that when looking for $k$-walks in $H$ using $\mathcal{B}$, one is only interesting in taking even powers of $\mathcal{B}$.  In particular, we note that $\mathcal{B}^2$ is a block matrix of the form 
$$\mathcal{B}^2 = \begin{pmatrix}
         XZ
  & \vline & 0
 \\
\hline
  0 & \vline &
  ZX
    \end{pmatrix}.$$

Due to the block structure, higher-ordered even powers of $\mathcal{B}$ will have the same block structure.  As we will soon show, all information regarding $k$-paths in $H$ can be recovered from taking powers of the $n \times n$ matrix $XZ$.  In light of this, we propose the following definition of the nilpotent hypergraph adjacency matrix.

\begin{definition}
Let $H=(V,E)$ be a hypergraph on $n$ vertices and $m$ hyperedges with $n \times m$ incidence matrix $C$.  Let $X$ be the $n \times m$ matrix given by $$X = C  \begin{pmatrix}
    \varepsilon_{1} & 0 & \dots & 0 \\
    0 & \varepsilon_{2} & \dots & 0 \\
    \vdots & \vdots & \ddots & \vdots \\
    0 & 0 & \dots & \varepsilon_{m}
  \end{pmatrix}$$ and let $Z$ be the $m \times n$ matrix given by \begin{equation*}
  Z =  C^{T} \begin{pmatrix}
    \zeta_{1} & 0 & \dots & 0 \\
    0 & \zeta_{2} & \dots & 0 \\
    \vdots & \vdots & \ddots & \vdots \\
    0 & 0 & \dots & \zeta_{n}
  \end{pmatrix}.
  \end{equation*}  The {\em nilpotent adjacency matrix of the hypergraph} $H$ is defined to be the $n \times n$ matrix $\Omega = XZ$, whose entries are elements of $\Z_n\otimes\I_m$.
\end{definition}

\begin{example}
For the hypergraph $H$ in Figure \ref{first}, we see that
\begin{equation*}
X = \begin{pmatrix}
\varepsilon_{1} & \varepsilon_{2} & \varepsilon_{3} & 0 & 0 & 0 \\
\varepsilon_{1} & 0 & 0 & 0 & 0 & 0 \\
\varepsilon_{1} & 0 & 0 & \varepsilon_4 & 0 & 0 \\
0 & \varepsilon_{2} & \varepsilon_{3} & 0 & \varepsilon_{5} & 0 \\
0 & 0 & \varepsilon_{3} & 0 & 0 & \varepsilon_6 \\
0 & 0 & 0 & \varepsilon_{4} & \varepsilon_{5} & \varepsilon_{6} \\
0 & \varepsilon_{2} & 0 & 0 & 0 & 0 
\end{pmatrix}
\end{equation*}
and
\begin{equation*}
Z =\begin{pmatrix}
\zeta_{1} & \zeta_{2} & \zeta_{3} & 0 & 0 & 0 & 0 \\
\zeta_{1} & 0 & 0 & \zeta_{4} & 0 & 0 & \zeta_{7} \\
\zeta_{1} & 0 & 0 & \zeta_{4} & \zeta_{5} & 0 & 0 \\
0 & 0 & \zeta_{3} & 0 & 0 & \zeta_{6} & 0 \\
0 & 0 & 0 & \zeta_{4} & 0 & \zeta_{6} & 0 \\
0 & 0 & 0 & 0 & \zeta_{5} & \zeta_{6} & 0 
\end{pmatrix}.
\end{equation*}\end{example}
The nilpotent hypergraph adjacency matrix of $H$ is the $6\times 9$ matrix \begin{equation*}\Omega=XZ=(\Omega_L\mid \Omega_C \mid \Omega_R),
\end{equation*}
 where $\Omega_L$, $\Omega_C$, and $\Omega_R$ are the following $6\times 3$, $6\times 1$, and $6\times 3$ matrices, respectively:
\begin{align*} \Omega_L&=   
    \begin{pmatrix}
    \zeta_{1}(\varepsilon_{1}+\varepsilon_{2}+\varepsilon_{3}) & \zeta_{2}\,\varepsilon_{1} & \zeta_{3}\,\varepsilon_{1} \\
    \zeta_{1}\,\varepsilon_{1} & \zeta_{2}\,\varepsilon_{1} & \zeta_{3}\,\varepsilon_{1}  \\
    \zeta_{1}\,\varepsilon_{1} & \zeta_{2}\,\varepsilon_{1} & \zeta_{3}(\varepsilon_{1}+\varepsilon_{4}) \\
    \zeta_{1}(\varepsilon_{2}+\varepsilon_{3}) & 0 & 0  \\
    \zeta_{1}\,\varepsilon_{3} & 0 & 0 \\
    0 & 0 & \zeta_{3}\,\varepsilon_{4} \\
    \zeta_{1}\,\varepsilon_{2} & 0 & 0
    \end{pmatrix} 
\end{align*}
\begin{align*} \Omega_C&=   
    \begin{pmatrix}
 \zeta_{4}(\varepsilon_{2}+\varepsilon_{3})  \\
 0 \\
0  \\
  \zeta_{4}(\varepsilon_{2}+\varepsilon_{3}+\varepsilon_{5}) 
  \\
  \zeta_{4}\,\varepsilon_{3}  \\
  \zeta_{4}\,\varepsilon_{5}  \\
  \zeta_{4}\,\varepsilon_{2}  
    \end{pmatrix}
\end{align*}

\begin{align*} \Omega_R&=   
    \begin{pmatrix}
 \zeta_{5}\,\varepsilon_{3} & 0 & \zeta_{7}\,\varepsilon_{2} \\
 0 & 0 & 0 \\
 0 & \zeta_{6}\,\varepsilon_{4} & 0 \\
 \zeta_{5}\,\varepsilon_{3} & \zeta_{6}\varepsilon_{5} & \zeta_{7}\,\varepsilon_{2} \\
  \zeta_{5}(\varepsilon_{3}+\varepsilon_{6}) & \zeta_{6}\,\varepsilon_{6} & 0 \\
 \zeta_{5}\,\varepsilon_{6} & \zeta_{6}(\varepsilon_{4}+\varepsilon_{5}+\varepsilon_{6}) & 0 \\
 0 & 0 & \zeta_{7}\,\varepsilon_{2} 
    \end{pmatrix}.
\end{align*}

Define $\langle\zeta_{i}|$  to denote the row vector with zeon generator $\zeta_{i}$ in the $i$th column and zeros everywhere else.  Define $|j\rangle$ to denote the column vector with $1$ in the $j$th row with zeros everywhere else.  We can now enumerate $k$-paths in a hypergraph as formalized in the following result.   

\begin{theorem}
Let $H=(V,E)$ be a hypergraph with $n$ vertices and $m$ hyperedges and let $\Omega$ be the nilpotent hypergraph adjacency matrix of $H$. Then for $k \in \mathbb{N}$ and $1 \leq i \neq j \leq n$, we have 
\begin{equation*}\langle \zeta_{i}|\Omega^k|j\rangle = \sum_{\substack{I \subseteq V \\ |I|=k+1}} \omega_I\zeta_I\varepsilon_J,
\end{equation*}
where $\omega_I$ is the number of $k$-paths from $v_i$ to $v_j$ in $H$ on vertex set $I$ and hyperedge set $J$.  Further, when $i=j$ and $k \geq 2$, we have \begin{equation*}
\langle i | \Omega^k | i \rangle = \sum_{\substack{I \subseteq V \\ |I| = k}} \omega_I \zeta_I\varepsilon_J,
\end{equation*} where $\omega_I$ denotes the number of $k$-cycles in $H$ based at vertex $v_i$ on vertex set $I$ and hyperedge set $J$.
\end{theorem}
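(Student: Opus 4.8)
The plan is to interpret the $(i,j)$-entry of $\Omega^k$ as a sum over length-$k$ walks in $H$ and then argue that the zeon and idem-Clifford factors together force exactly the path-counting behavior claimed. First I would expand $\Omega^k = (XZ)^k$ by the usual matrix-product formula, writing its $(i,j)$-entry as a sum over intermediate-index sequences. Since $\Omega = XZ$ where the $(a,b)$-entry of $X$ is $C_{ab}\varepsilon_b$ and the $(b,c)$-entry of $Z$ is $C_{cb}\zeta_c$, the $(i,j)$-entry of $\Omega$ is $\sum_{b} C_{ib}C_{jb}\,\zeta_j\varepsilon_b$, which already encodes ``go from $v_i$ to $v_j$ through hyperedge $e_b$'' (recording the destination vertex as $\zeta_j$ and the hyperedge used as $\varepsilon_b$). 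Iterating, the $(i,j)$-entry of $\Omega^k$ is a sum over vertex walks $v_i = u_0, u_1, \dots, u_k = v_j$ together with a choice of hyperedge $e_{b_t}$ for each step $t$ with $u_{t-1}, u_t \in e_{b_t}$; each such term contributes the monomial $\zeta_{u_1}\zeta_{u_2}\cdots\zeta_{u_k}\,\varepsilon_{b_1}\varepsilon_{b_2}\cdots\varepsilon_{b_k}$. I would make this bijection between monomials-before-reduction and hypergraph walks precise as the first main step.

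Next I would apply the algebra relations. In $\Z_n$, $\zeta_c^2 = 0$, so a surviving monomial requires $u_1, \dots, u_k$ to be pairwise distinct; contracting with $\langle\zeta_i|$ on the left multiplies by $\zeta_i$, so the full vertex set is $I = \{v_i, u_1, \dots, u_k\}$, and a nonzero contribution demands $|I| = k+1$, i.e. $v_i \notin \{u_1,\dots,u_k\}$ as well — exactly a $k$-path from $v_i$ to $v_j$. In $\I_m$, $\varepsilon_b^2 = \varepsilon_b$, so repeated hyperedges are \emph{not} killed but merely collapsed: $\varepsilon_{b_1}\cdots\varepsilon_{b_k} = \varepsilon_J$ where $J = \{b_1, \dots, b_k\}$ is the set of hyperedges appearing along the walk, and this is a genuine invariant of the path (a path on vertex set $I$ may traverse a given hyperedge more than once, and the idempotent relation records the hyperedge set $J$ without overcounting). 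So after reduction, each $k$-path from $v_i$ to $v_j$ contributes $\zeta_I\,\varepsilon_J$ with the coefficient $\omega_I$ counting how many such paths share that vertex set (and, implicitly, the same $J$, since $I$ and the path structure determine $J$ only up to the choices the walk makes — I should be careful here and state that $\omega_I$ counts paths with vertex set $I$ and that $J$ is the union of their edge sets, matching the theorem's phrasing). This gives the first displayed formula.

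For the closed case $i = j$ with $k \ge 2$: the same expansion gives walks $v_i = u_0, u_1, \dots, u_k = v_i$. Now $\langle i|\Omega^k|i\rangle$ (note: here the left contraction is $\langle i|$, the plain basis covector, not $\langle\zeta_i|$, so no extra $\zeta_i$ factor is introduced — I would flag this asymmetry with the path case). The monomial is $\zeta_{u_1}\cdots\zeta_{u_k} = \zeta_{u_1}\cdots\zeta_{u_{k-1}}\zeta_i$ since $u_k = v_i$; survival under $\zeta^2 = 0$ forces $u_1, \dots, u_{k-1}, i$ pairwise distinct, so $I = \{v_i, u_1, \dots, u_{k-1}\}$ has $|I| = k$ — precisely a $k$-cycle based at $v_i$ (the base vertex repeated exactly once, as terminal vertex, matching the definition of $k$-cycle in the excerpt). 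The idem-Clifford factor again collapses to $\varepsilon_J$ over the hyperedge set, yielding the second formula with $\omega_I$ the number of such $k$-cycles.

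The main obstacle I anticipate is bookkeeping around two subtle points rather than any deep difficulty. First, the interaction of the \emph{two} algebras: one must check that $\Z_n \otimes \I_m$ multiplication does exactly ``nilpotent on the $\zeta$-slots, idempotent on the $\varepsilon$-slots'' with no cross-terms, which is immediate from the tensor structure but should be stated. Second, and more delicate, is making sure the indexing of $\omega_I$ and $J$ in the statement is consistent: the theorem writes $\varepsilon_J$ as if $J$ is determined by $I$, but different $k$-paths on the same vertex set $I$ could in principle use different hyperedge sets; I would need to either argue the monomials with distinct $J$ stay separate (so really the sum is over pairs $(I,J)$ and $\omega_{I}$ should be read as $\omega_{I,J}$) or adopt the paper's evident convention. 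I expect the cleanest route is to prove the refined statement summing over $(I,J)$ pairs and then note it specializes to the displayed form, pointing out that the $k=0,1$ and $i=j$ with $k \le 1$ edge cases are excluded exactly because the contraction conventions or the ``distinct vertices'' count would misbehave (e.g. for $k=1$, $i=j$ there is no cycle, and for $k\le 1$ the closed-walk monomial degenerates).
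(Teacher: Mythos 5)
Your proposal is correct and rests on the same mechanism as the paper's own proof: entries of $\Omega^k$ enumerate hypergraph walks, the nilpotent $\zeta$'s kill repeated interior vertices (with the left contraction by $\zeta_i$ handling the initial vertex in the $i\neq j$ case), and the idempotent $\varepsilon$'s collapse the hyperedge labels to a set $\varepsilon_J$. The only organizational difference is that the paper argues by induction on $k$, carrying the invariant ``walks that revisit no vertex except possibly $v_i$'' and applying the $\zeta_i$ contraction at the end, whereas you expand $(XZ)^k$ directly as a sum over walks; your remark that the sum is really indexed by pairs $(I,J)$ is a fair reading of the statement and is consistent with the paper's worked example.
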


\begin{proof}
Proof is by induction on $k$.  By the construction of $\Omega$, nonzero entries of $\Omega$ are of the form $$\Omega_{ij}=\zeta_{j}\sum_{e_{\ell},v_i \text{incident in $H$}}\varepsilon_{\ell}$$ and so
    $$\langle \zeta_{\{i\}} \mid \Omega \mid j \rangle = \zeta_{\{i,j\}}\sum_{e_{\ell},v_i \text{incident in $H$}}\varepsilon_{\ell}$$
when $v_i$ and $v_j$ are adjacent in $H$ and $0$ otherwise.
Hence the claim holds for $k=1$.  Now suppose the claim holds for some $k \geq 1$.  Write $\Omega_{ij}^k = \langle i \mid \Omega^k \mid j \rangle$.  It follows from the inductive hypothesis that $\displaystyle\Omega_{ij}^k = \sum_{|I|=k} \omega_I\zeta_I\varepsilon_J$ where $\omega_I$ is the number of $k$-walks from $v_i$ to $v_j$ on vertex set $I$ and hyperedge set $J$ which do not revisit any vertex except for possibly $v_i$ exactly once.  Now writing $\Omega^{k+1}=\Omega^k \Omega$ implies that
\begin{align*}
 {\Omega_{ij}}^{k+1} & = \sum_{r=1}^n {\Omega_{ir}}^k \Omega_{rj} \\
         & = \sum_{r=1}^n \left( \sum_{|I|=k} \omega_I\zeta_I \varepsilon_J \right) \Omega_{rj}
\end{align*}
where $\omega_I$ is the number of $k$-walks from $v_i$ to $v_r$ on vertex set $I$ and hyperedge set $J$ which do not revisit any vertex except for possibly $v_i$ exactly once.  Since $\Omega_{rj} = \zeta_{j}\displaystyle\sum_{e_{\ell},v_r \text{incident in $H$}} \varepsilon_{\ell}$ when $v_r$ and $v_j$ are adjacent and $0$ otherwise, we have that ${\Omega_{ij}}^{k+1} = \displaystyle\sum_{|I|=k+1}\omega_I\zeta_I \varepsilon_J$ where $\omega_I$ is the number of $(k+1)$-walks from $v_i$ to $v_j$ which do not revisit any vertex except possibly $v_i$ exactly once.  Multiplication by $\zeta_{\{i\}}$ cancels walks which revisit $v_i$ and so $\langle \zeta_{\{i\}} \mid \Omega^{k+1} \mid j \rangle = \displaystyle\sum_{|I|=k+2}\omega_I\zeta_I \varepsilon_J$ where $\omega_I$ is the number of $(k+1)$-paths from $v_i$ to $v_j$ and the claim holds when $i \neq j$.  When $i = j$, the last step of the walk revisits $v_i$ and no cancellation is necessary.  In this case, $\langle i \mid \Omega^{k+1} \mid j \rangle = \displaystyle\sum_{|I|=k+1}\omega_I\zeta_I \varepsilon_J$ where $\omega_I$ is the number of $k$-cycles based at $v_i$ on vertex set $I$ and hyperedge set $J$.  Hence we have the result by induction.
\end{proof}

\begin{example}
For the hypergraph in Figure \ref{first}, we see that 
\begin{align*}
\langle \zeta_{3} \mid \Omega^3 \mid 4 \rangle &= \zeta_{\{1,2,3,4\}}\varepsilon_{\{1,2\}}+\zeta_{\{1,2,3,4\}}\varepsilon_{\{1,3\}}+\zeta_{\{1,3,4,5\}}\varepsilon_{\{1,3\}}\\&+\zeta_{\{1,3,4,7\}}\varepsilon_{\{1,2\}}+\zeta_{\{3,4,5,6\}}\varepsilon_{\{3,4,6\}},
\end{align*} which shows that there are five $3$-paths from $v_3$ to $v_4$ in $H$.
\end{example}

We could also use the nilpotent hypergraph adjacency matrix to enumerate $k$-trails in a hypergraph by interchanging the roles of zeon and idempotent variable labels, i.e. labeling the vertices in $B_H$ corresponding to vertices in $H$ with idempotent variables and labeling the vertices corresponding to hyperedges with zeon generators.  

\subsection{Cliques and Independent Sets}
In this section, we will discuss applications of zeons to finding cliques and independent sets in graphs and hypergraphs.  Recall that a {\em clique} in a graph $G$ is a subgraph of $G$ which is isomorphic to a complete graph.  Equivalently, a clique in a graph is a subset of vertices which are pairwise adjacent. An {\em independent set} in a graph $G$ is a subset of vertices which are pairwise non-adjacent.  It is not hard to see that cliques in a graph $G$ form independent sets in the complement of $G$.  

In the case of an ordinary graph $G=(V,E)$, one can find cliques in $G$ by finding independent sets in $G'$, the complement of $G$.  Independent sets of $G'$ can be found by constructing a polynomial with zeon coefficients whose powers reveal independent sets in a graph \cite{Staples2019}. 
 To construct this polynomial, called the ``zeon clique representation'' of $G$, we will first need to make any isolated vertices of $G'$ self-adjacent by adding loops to the edge set of $G'$.  More specifically, we create a graph $G''=(V,E'')$ with a new edge set given by $E'' = E' \cup L$ where $L = \{\{v,v\} \mid v \text{ isolated in } G'\}$.   Let $\varphi: E'' \to \{\zeta_1,\zeta_2,\dots,\zeta_{|E''|}\}$ be a labeling of these edges and let $\lambda: V \to \{\varepsilon_1, \varepsilon_2, \ldots, \varepsilon_n\}$ (the generators of $\I_n$), be a labeling of $V$.  Also let $\psi(v)=\displaystyle\prod_{e\,\text{\rm incident to}\,v}\varphi(e)$ be the product of incident edge labels for each vertex.  Finally, the {\em zeon clique representation of $G$} is defined by \begin{equation}\label{zeon clique rep}
\Phi_{G'} = \sum_{v \in V}\psi(v)\lambda(v).
\end{equation}

\begin{theorem}\label{ind sets}
Let $G$ be a graph on $n$ vertices and let $\Phi_{G'}$ be the zeon clique representation of $G$.  Then for $k \in \mathbb{N}$, $$\Phi_{G'}^k=k!\sum_{I \subseteq V, \mid I \mid = k}\zeta_{N(I)}x_I$$ where $N(I)$ represents the edges of $G''$ which are incident with vertices in $I$ and $$x_I = \prod_{v \in I}x_v$$ is a monomial whose index set $I$ represents the vertices of an independent set of size $k$ in $G'$ and equivalently a clique in $G$.
\end{theorem}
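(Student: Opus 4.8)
The plan is to expand $\Phi_{G'}^k$ as a multinomial sum over ordered $k$-tuples of vertices and then identify exactly which terms survive. Writing $\Phi_{G'}=\sum_{v\in V}\psi(v)\lambda(v)$ and using that the zeon and idem-Clifford tensor factors multiply independently (coefficients live in $\Z\otimes\I$), one gets
$$\Phi_{G'}^k = \sum_{(v_1,\dots,v_k)\in V^k}\Big(\psi(v_1)\cdots\psi(v_k)\Big)\Big(\lambda(v_1)\cdots\lambda(v_k)\Big),$$
so it suffices to analyze the zeon factor $\prod_i\psi(v_i)$ and the idem-Clifford factor $\prod_i\lambda(v_i)$ separately for each tuple.

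The heart of the argument is to show that $\prod_{i=1}^k\psi(v_i)=0$ unless $\{v_1,\dots,v_k\}$ consists of $k$ distinct, pairwise non-adjacent vertices of $G'$, i.e.\ an independent set of size $k$ in $G'$ (equivalently a $k$-clique of $G$). There are two cancellation mechanisms. First, if a vertex is repeated, say $v_i=v_j$ with $i\ne j$: because a loop was attached to every isolated vertex of $G'$ in forming $G''$, the product $\psi(v_i)$ contains at least one generator $\varphi(e)$, so $\psi(v_i)\psi(v_j)$ contains $\varphi(e)^2=0$ and the whole term vanishes. (This is exactly the purpose of passing from $G'$ to $G''$: it guarantees $\psi(v)\ne 1$ for every $v$, which the raw complement $G'$ does not.) Second, if $v_i\ne v_j$ but they are adjacent in $G'$, then they are joined by an edge $e$ of $G''$ (the only new edges of $G''$ are loops, which are private to a single vertex), so $\varphi(e)$ divides both $\psi(v_i)$ and $\psi(v_j)$ and again $\varphi(e)^2=0$ kills the term. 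Conversely, when $I=\{v_1,\dots,v_k\}$ consists of $k$ distinct pairwise non-adjacent vertices, no edge of $G''$ is incident to two of them (a genuine edge would witness an adjacency, and loops are private), so the generators occurring in $\psi(v_1),\dots,\psi(v_k)$ are pairwise distinct and their product is the nonzero squarefree blade $\zeta_{N(I)}$, where $N(I)$ is the set of edges of $G''$ incident to $I$. Meanwhile, since $\lambda$ is a bijection $V\to\{\varepsilon_1,\dots,\varepsilon_n\}$ and the $v_i$ are distinct, $\lambda(v_1)\cdots\lambda(v_k)=\prod_{v\in I}\lambda(v)=x_I$, depending only on $I$.

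Finally I would collect terms: each independent $k$-set $I$ of $G'$ is the underlying set of exactly $k!$ ordered tuples $(v_1,\dots,v_k)$, each contributing $\zeta_{N(I)}x_I$, while every other tuple contributes $0$; hence $\Phi_{G'}^k = k!\sum_I \zeta_{N(I)}x_I$ over the independent $k$-sets of $G'$, and extending the sum to all $I\subseteq V$ with $|I|=k$ is harmless since $\zeta_{N(I)}=\prod_{v\in I}\psi(v)=0$ for every non-independent $I$. Since independent sets of $G'$ are precisely the cliques of $G$, this is the claimed identity. I expect no serious obstacle — the computation is a direct expansion — but the one point needing genuine care is the repeated-vertex cancellation, which relies essentially on having first added loops at isolated vertices so that $\psi(v)\ne 1$ for all $v$. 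A proof by induction on $k$, using $\Phi_{G'}^{k+1}=\Phi_{G'}^k\,\Phi_{G'}$ together with the same two cancellation observations, is an equally viable route and mirrors the style of the preceding theorem.
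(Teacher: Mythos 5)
Your proof is correct. The paper actually states this theorem without proof (it is recalled from \cite{Staples2019}), so there is nothing internal to compare against; but your argument --- the multinomial expansion over ordered $k$-tuples, the two cancellation mechanisms (repeated vertices are killed because the loops added in forming $G''$ guarantee every $\psi(v)$ is a nonempty and hence nilpotent product, and $G'$-adjacent pairs are killed by the shared edge label squaring to zero), and the $k!$ count of orderings of each surviving independent set --- is exactly the standard one, and it mirrors the style of the proofs the paper does supply for its hypergraph analogues (the propositions on weak independent sets and on matchings).
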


We now aim to find cliques and independent sets in hypergraphs, but a few difficulties arise in the case of hypergraphs.  The first difficulty is that in ordinary graphs, an independent set can be viewed as a collection of pairwise non-adjacent vertices or equivalently as a collection of vertices which does not contain any edge of the graph. These are not equivalent properties in hypergraphs due to the arbitrary size of hyperedges.  This non-equivalence inspires the following definitions seen in \cite{Halldorsson2009}.

\begin{definition}
A {\em weak independent set} in a hypergraph $H=(V,E)$ is a subset of $V$ which does not contain any hyperedge of $H$.  A {\em strong independent set} in a hypergraph $H=(V,E)$ is a subset of $V$ which intersects any hyperedge in $E$ in at most one vertex.
\end{definition}

Strong independent sets can be viewed as a special case of $k$-independent sets defined in \cite{Cutler2013}.  Specifically, strong independent sets are $1$-independent sets described in the following definition. Also note that independent sets in an ordinary graph are strong independent sets in $2$-uniform hypergraphs.

\begin{definition}
A {\em $k$-independent set} in a hypergraph $H=(V,E)$ is a collection of vertices $I$ such that $\mid I \cap e \mid \leq k$ for each $e \in E$.
\end{definition}

The second difficulty arises when trying to define cliques and complements in the hypergraph setting in a way that gives a correspondence between cliques and independent sets.  A natural approach may be to let a hypergraph clique be a copy of a complete hypergraph, i.e. a set of vertices having all possible hyperedges, and defining the complement of a hypergraph $H=(V,E)$ to be the hypergraph $H'=(V,E')$ where $E' = \mathcal{P}(V) \setminus E$.  Under this approach, the correspondence between cliques and weak independent sets is achieved.  However, the zeon clique representation is not a sufficient tool for revealing cliques in this case.  Consider a hypergraph which contains a clique and also contains a hyperedge disjoint from the clique.  The complement of such a hypergraph would have a hyperedge containing all vertices in the clique as well as the vertices in the disjoint hyperedge.  Following the construction of the zeon clique representation would label this hyperedge with a zeon generator and so the vertices of the clique in the original hypergraph cannot appear together in a power of the zeon clique representation.  Other definitions for hypergraph cliques and complements or some alternative labeling of hyperedges may resolve this issue, but it remains open. Given these difficulties, our results will be limited to finding weak and strong independent sets in hypergraphs.

To find weak independent sets in a hypergraph $H$, we construct an element $\Phi_H\in\Z_s\otimes\I$ for suitable choice of zeon signature $\mathbf{s}$.  In this case,  each hyperedge is identified with a zeon generator whose degree of nilpotency is equal to the cardinality of the hyperedge.  Doing this prevents us from appending loops to isolated vertices as we did in the case of ordinary graphs since they would not be distinguishable from singleton hyperedges.  Isolated vertices may appear in weak independent sets, but vertices forming singleton hyperedges may not.  With this in mind, we consider the case of hypergraphs containing no isolated vertices.

\begin{definition}
Let $H=(V,E)$ be a hypergraph with $m$ hyperedges and no isolated vertices. Assign a distinct generalized zeon element $u$ to each hyperedge $e$ such that the degree of nilpotency of $u$ is equal to $|e|$, and let $\mathbf{s}$ denote the signature of the corresponding generalized zeon algebra.  Let $\varphi(e_j)=u_j$ denote the label of hyperedge $e_j$, $1 \leq j \leq m$.  Let $\lambda: V \to \{\varepsilon_1, \varepsilon_2, \ldots, \varepsilon_n\}$, where $ \{\varepsilon_1, \varepsilon_2, \ldots, \varepsilon_n\}$ are generators of $\I_n$, be a labeling of $V$ and let $\psi(v)=\displaystyle\prod_{e\,\text{\rm incident to}\,v}\varphi(e)$ be the product of incident edge labels for each vertex.   The {\em zeon weak independent set representation} of $H$ is defined to be the $\Z_{\mathbf{s}}\otimes\I$ element \begin{equation*}
\Phi_H = \sum_{v \in V}\psi(v)\lambda(v).
\end{equation*}
\end{definition}

\begin{example}
Let $u_1$, $u_2$, and $u_3$ be distinct generalized zeons with degree of nilpotency 3 and let $u_4,u_5$, and $u_6$ be distinct zeons with degree of nilpotency 2.  
For $i=1, 2,\ldots, 6$, set $\varphi(e_i)=u_i$.  The zeon weak independent set representation of the hypergraph in Figure \ref{first} is then
\begin{align}
    \Phi_H &= \underset{3}{\nu _1}{}\underset{3}{\nu _2}{}\underset{3}{\nu _3}{}\,\varepsilon_{1}+\underset{3}{\nu _1}{}\,\varepsilon_{2}+\underset{3}{\nu _1}{}\underset{2}{\nu _4}{}\,\varepsilon_{3}+\underset{3}{\nu _2}{}\underset{3}{\nu _3}{}\underset{2}{\nu _5}{}\,\varepsilon_{4}\nonumber\\
    &\hspace{10pt}+\underset{3}{\nu _3}{}\underset{2}{\nu _6}{}\,\varepsilon_{5}+\underset{2}{\nu _4}{}\underset{2}{\nu _5}{}\underset{2}{\nu _6}{}\,\varepsilon_{6}+\underset{3}{\nu _2}{}\,\varepsilon_{7}.\label{weak ind set rep}
\end{align}
\end{example}

It is important to note that the zeon construction for hypergraphs differs from the ordinary graph construction in a significant way.  For the case of ordinary graphs, all edge labels have degree of nilpotency equal to 2.  Consequently, considering the multinomial expansion 
\begin{align*}
{\Phi_G}^k & = \left(\sum_{v \in V}\psi(v)\lambda(v)\right)^k \\
         & = \sum_{t_1+t_2+\dots+t_n=k} \binom{k}{t_1,t_2,\dots,t_n}\prod_{i=1}^n(\psi(v_i))^{t_i}\lambda(v_i)^{t_i},
\end{align*}
it follows that the only nonzero terms correspond to $n$-tuples $(t_1,t_2,\dots,t_n) \in \{0,1\}^n$.  As a result, each index set appearing in $\Phi_G^k$ must be of size $k$. However, the $t_i$ may be greater than one in the expansion of the hypergraph weak independent set representation.  This results in a product of fewer than $k$ of the $\varepsilon_i$'s and as so terms may appear in ${\Phi_H}^k$ whose index sets contain less than $k$ elements.  Note however that the sum in the multinomial expansion of ${\Phi_H}^k$ is taken over all possible choice of $(t_1,t_2,\dots,t_n)$ such that $t_1+t_2+\dots+t_n=k$.  In particular, we have a term for each $(t_1.t_2,\dots,t_n) \in \{0,1\}^n$ where $t_1+t_2+\dots+t_n=k$.  These terms represent vertex sets of size $k$ in a hypergraph $H$ and have nonzero coefficients precisely when they represent a weak independent set by construction of $\psi(v)$.  As a result, $\Phi_H$ reveals all weak independent sets of size $k$ in $H$ and may include terms that represent some, but not all, weak independent sets of size less than $k$.

\begin{proposition}\label{weak independent set enumeration}
Let $H=(V,E)$ be a hypergraph with no isolated vertices and let $\Phi_H$ be the zeon weak independent set representation of $H$.  Then for $k \in \mathbb{N}$, \begin{equation*}
{\Phi_H}^k = \sum_{I \subseteq V, \mid I \mid \leq k}\alpha_I u_J \varepsilon_I
\end{equation*}
where $u_J$ is the product of hyperedge labels incident to vertices in $I$ and \begin{equation*}
\varepsilon_I = \prod_{i \in I}\varepsilon_i
\end{equation*} 
is a basis blade of $\I_n$ whose index set represents an independent set of size $|I| \leq k$ in $H$.  In particular, if $H$ contains a weak independent set of size $k$, then it is represented by the index set of some $\varepsilon_I$ in ${\Phi_H}^k$.
\end{proposition}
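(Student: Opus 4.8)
The plan is to compute the $k$-th power of $\Phi_H$ via the multinomial theorem and then read off which terms survive. Writing $\Phi_H = \sum_{v\in V}\psi(v)\lambda(v)$ with $\lambda(v)=\varepsilon_{v}$ an idem-Clifford generator and $\psi(v)=\prod_{e\ni v}\varphi(e)$ a product of generalized-zeon generators, associativity and commutativity give
\begin{equation*}
{\Phi_H}^k = \sum_{t_1+\dots+t_n=k}\binom{k}{t_1,\dots,t_n}\prod_{i=1}^n \psi(v_i)^{t_i}\,\varepsilon_i^{t_i}.
\end{equation*}
First I would use the idempotent relation $\varepsilon_i^{t_i}=\varepsilon_i$ whenever $t_i\ge 1$, so that the $\varepsilon$-part of each term collapses to $\varepsilon_I$ where $I=\{i : t_i\ge 1\}$, a set of size at most $k$ (it can be strictly smaller because some $t_i$ may exceed $1$). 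This already accounts for the ``$|I|\le k$'' in the statement and explains, as the surrounding discussion notes, why the hypergraph case behaves differently from the ordinary-graph case.

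Next I would analyze the zeon part $\prod_{i\in I}\psi(v_i)^{t_i}$. For a hyperedge $e$ of cardinality $|e|$, its label $\varphi(e)$ is a generalized zeon of nilpotency degree exactly $|e|$, so $\varphi(e)^p=0$ as soon as $p\ge|e|$, i.e. as soon as more than $|e|-1$ factors of $\varphi(e)$ appear. The total exponent of $\varphi(e)$ in $\prod_{i\in I}\psi(v_i)^{t_i}$ equals $\sum_{v_i\in I\cap e}t_i$. Hence the term vanishes unless, for every hyperedge $e$, we have $\sum_{v_i\in I\cap e}t_i\le |e|-1$; in particular $|I\cap e|\le\sum_{v_i\in I\cap e}t_i\le |e|-1<|e|$, which is precisely the condition that $I$ contains no hyperedge of $H$, i.e. $I$ is a weak independent set. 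Conversely, when $I$ is a weak independent set I would exhibit a surviving term: take $(t_i)$ supported on $I$ with, say, each $t_i$ at most $|e|-|I\cap e|+1$ arranged so no hyperedge constraint is violated — the simplest choice being a composition of $k$ into $|I|$ parts that is admissible, which exists whenever $|I|\le k$ and $I$ is weakly independent (e.g. distribute the extra $k-|I|$ units one hyperedge at a time, never filling a hyperedge). For such a $(t_i)$ the zeon factor is a nonzero scalar multiple of $u_J$ with $J=\{j : e_j\cap I\ne\varnothing\}$, and since distinct $(t_i)$ supported on the same $I$ produce zeon monomials that are $\mathbb{R}$-linearly independent in $\Z_{\mathbf s}$ (they have different exponent vectors), there is no cancellation across them. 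Collecting terms by $I$ gives the claimed form ${\Phi_H}^k=\sum_{|I|\le k}\alpha_I u_J\varepsilon_I$ with $\alpha_I\ne 0$ exactly when $I$ is weakly independent and admits an admissible composition of $k$, which in particular covers every weak independent set of size exactly $k$.

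The main obstacle I anticipate is the converse/non-cancellation argument: unlike the ordinary-graph case where each surviving $I$ receives a single monomial, here a fixed $I$ collects contributions from \emph{many} tuples $(t_i)$, and I must be sure these do not cancel. The clean way to handle this is to note that the $\I_n$-component is literally $\varepsilon_I$ for all of them, so I may factor it out and work purely in $\Z_{\mathbf s}$; there, two products $\prod_i \varphi(\cdot)$-monomials coincide iff they have the same multiset of generalized-zeon generators, and the multinomial coefficients are strictly positive, so the coefficient of each distinct zeon monomial is a positive sum and $\alpha_I$ (the scalar sum, if one passes to $\langle\langle\cdot\rangle\rangle$, or the full zeon coefficient otherwise) is nonzero. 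A secondary technical point is bookkeeping the hyperedge set $J$: one must check that $J=\{j:e_j\cap I\ne\varnothing\}$ is the same for every admissible $(t_i)$ supported on $I$, which is immediate since $J$ depends only on the support $I$ and not on the exponents. With these observations the induction-free multinomial computation closes the proof.
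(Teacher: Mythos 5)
Your proposal is correct and follows essentially the same route as the paper: expand ${\Phi_H}^k$ by the multinomial theorem, collapse the idempotent factors to $\varepsilon_I$ with $I$ the support of the exponent tuple, and observe that the generalized-zeon label of any hyperedge contained in $I$ would appear with exponent at least its nilpotency degree and kill the term; the paper's printed proof is just this last necessity observation, with the multinomial bookkeeping carried out in the discussion preceding the proposition, so your version is a more complete write-up (the explicit non-cancellation argument via positive multinomial coefficients is a genuine improvement the paper omits). One aside you should delete or weaken: the claim that an admissible composition of $k$ supported on $I$ exists \emph{whenever} $|I|\le k$ and $I$ is weakly independent is false --- e.g.\ for a single hyperedge $e=\{v_1,v_2\}$, $I=\{v_1\}$, $k=2$, the only composition forces $\varphi(e)^2=0$. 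This does not damage your proof, since your final sentence correctly hedges to ``$I$ weakly independent and admitting an admissible composition,'' and the proposition's substantive converse only concerns $|I|=k$, where the all-ones tuple is admissible precisely because $|I\cap e|\le|e|-1$ for every $e$.
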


\begin{proof}
  Terms in ${\Phi_H}^k$ can only be nonzero when $I \subseteq V$ is a collection of vertices not containing a hyperedge of $H$.  Otherwise, the edge label the contained hyperedge $e$ would appear at least $|e|$ times in $u_J$ and hence be zero.
\end{proof}

\begin{example}
    In terms of generators of $\Z_{(3,3,3,2,2,2)}$, the generalized zeons $u_1, \ldots, u_6$ associated with the hypergraph $H$ of Figure \ref{first} are as follows: 
\begin{center}
\begin{tabular}{l c r}
$u_1=\underset{3}{\nu _1}{}$ & $u_2=\underset{3}{\nu _2}{}$& $u_3=\underset{3}{\nu _3}{}$\\
$u_4=\underset{2}{\nu _4}{}$ & $u_5=\underset{2}{\nu _5}{}$ & $u_6=\underset{2}{\nu _6}{}$.
\end{tabular}
\end{center}
Letting $\Phi_H\in\Z_{(3,3,3,2,2,2)}\otimes \I_7$ denote the zeon weak independent set representation of $H$ as defined in \eqref{weak ind set rep},  direct computation via {\em Mathematica} yields the following:
\begin{align*}
    {\Phi_H}^5 &= 30 \underset{3}{\nu _1}{}^2 \underset{3}{\nu _2}{}^2 \underset{3}{\nu _3} \underset{2}{\nu _6}\, \varepsilon _{\{2,5,7\}}+60 \underset{3}{\nu _1}{}^2 \underset{3}{\nu _2}{}^2  \underset{3}{\nu _3} \underset{2}{\nu _4} \underset{2}{\nu _6}\,\varepsilon _{\{2,3,5,7\}}\\
    &\hskip10pt+60 \underset{3}{\nu _1}{}^2 \underset{3}{\nu _2}{}^2 \underset{3}{\nu _3}{}^2 \underset{2}{\nu _5} \underset{2}{\nu _6}\, \varepsilon _{\{2,4,5,7\}}+30 \underset{3}{\nu _1}{}^2 \underset{3}{\nu _2}{}^2 \underset{2}{\nu _4} \underset{2}{\nu _5} \underset{2}{\nu _6} \,\varepsilon _{\{2,6,7\}}\\
    &\hskip10pt+120 \underset{3}{\nu _1}{}^2 \underset{3}{\nu _2}{}^2 \underset{3}{\nu _3}{}^2 \underset{2}{\nu _4} \underset{2}{\nu _5} \underset{2}{\nu _6}\, \varepsilon _{\{2,3,4,5,7\}}.
    \end{align*}
By Proposition \ref{weak independent set enumeration}, the only weak independent set of size five in $H$ is  \hfill\break $\{v_2, v_3, v_4, v_5, v_7\}$.     
\end{example}

We can recover $k$-independent sets in a hypergraph $H$ containing no isolated vertices in the same way as weak independent sets in $H$ with the adjustment that all hyperedge labels are distinct zeons which have degree of nilpotency equal to $k+1$.
While we might not be able to find copies of a complete hypergraph using zeons, we are still able to find collections of pairwise adjacent vertices.  To do this, we construct an ordinary graph $G_I$ as follows:  Let the vertex set of $G_I$ be the same as that of $H$ with two vertices in $G_I$ being adjacent if and only if they are not adjacent in $H$.  Independent sets in this graph correspond to sets of pairwise adjacent vertices in $H$ and can be determined using the zeon clique representation of $G_I$.

\subsection{Matchings}\label{matchings subsection}
Recall that given a graph $G$, a $matching$ in $G$ is a subset of edges such that no pair of edges share a common vertex. A {\em k-matching} is a matching containing $k$ edges.  An $n$-matching on a graph containing $2n$ vertices is called a {\em perfect matching}. In other words, a perfect matching is a matching which contains all vertices of $G$.  

Here we generalize the zeon approach for graphs (see \cite{Staples2019}) to enumerate matchings in hypergraphs.

\begin{definition}  Given a hypergraph $H$, a {\em matching} in $H$ is a subset of hyperedges whose pairwise intersections are empty.  A {\em k-matching} is a matching containing $k$ hyperedges.  A {\em perfect matching} is a matching containing all vertices of $H$.  In particular, if $H$ is an $r$-uniform hypergraph on $rn$ vertices, then a perfect matching is a collection of $n$ disjoint hyperedges. 
\end{definition}

We begin by constructing a ``zeon incidence representation'' of $H$.  In the following, $\Gamma_H$  is still a sum of terms representing hyperedges of $H$ by using each hyperedge's incident vertices to determine the index set $J$.

\begin{definition}\label{zeon incidence rep}
If $H = (V,E)$ is a hypergraph on $n$ vertices, label the vertices with $\{1,2,\dots,n\}$ and assign the label $\zeta_J$ to each hyperedge $J \in E$.  Then the {\em zeon incidence representation} of $H$ is the zeon element  \begin{equation*}
\Gamma_H = \sum_{J \in E}\zeta_J.
\end{equation*}
\end{definition}

\begin{example}
The zeon incidence representation of the hypergraph $H$ in Figure \ref{first} is 
$\Gamma_H = \zeta_{\{1,2,3\}}+\zeta_{\{1,4,7\}}+\zeta_{\{1,4,5\}}+\zeta_{\{3,6\}}+\zeta_{\{4,6\}}+\zeta_{\{5,6\}}.$
\end{example}

\begin{proposition}\label{hypergraph matchings}
Let $H$ be a hypergraph on $n$ vertices with $\Gamma_H$ defined as above.  For $k \in \mathbb{N}$ we have \begin{equation*}
{\Gamma_H}^{k} = k!\sum_{I \in 2^{[n]}}\alpha_I\zeta_I
\end{equation*} where $\alpha_I$ is the number of $k$-matchings in $H$ on vertex set $I \subseteq V$.
\end{proposition}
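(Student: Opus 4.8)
The plan is to expand $\Gamma_H^{\,k}$ directly by distributivity and determine which terms survive. Listing the hyperedges as $E=\{e_1,\dots,e_m\}$, we have
\[
\Gamma_H^{\,k}=\Bigl(\sum_{J\in E}\zeta_J\Bigr)^{\!k}=\sum_{(F_1,\dots,F_k)\in E^{k}}\zeta_{F_1}\zeta_{F_2}\cdots\zeta_{F_k},
\]
a sum over all ordered $k$-tuples of hyperedges, repetitions allowed. The first step is to decide when such a product is nonzero. Since the zeon generators commute and satisfy $\zeta_i^{2}=0$, the product $\zeta_{F_1}\cdots\zeta_{F_k}$ equals $\zeta_{F_1\cup\cdots\cup F_k}$ when $F_1,\dots,F_k$ are pairwise disjoint, and equals $0$ otherwise, because any vertex lying in two of the $F_i$ contributes a squared generator. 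As hyperedges are nonempty, pairwise disjointness forces the $F_i$ to be pairwise distinct, so the surviving tuples are precisely the orderings of $k$-element sets of pairwise disjoint hyperedges; that is, of $k$-matchings of $H$.

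The second step is bookkeeping. Fix $I\in 2^{[n]}$. A surviving tuple $(F_1,\dots,F_k)$ contributes $\zeta_I$ exactly when $\{F_1,\dots,F_k\}$ is a $k$-matching $M$ with $\bigcup_{F\in M}F=I$, and each such $M$ arises from exactly $k!$ ordered tuples, since its $k$ hyperedges are distinct and may be listed in any order. Summing, the coefficient of $\zeta_I$ in $\Gamma_H^{\,k}$ is $k!\,\alpha_I$, where $\alpha_I$ counts the $k$-matchings of $H$ whose vertex set is $I$, which is the claimed identity. (One could alternatively argue by induction on $k$, as in the inductive proofs earlier in this section, multiplying $\Gamma_H^{\,k}$ by $\Gamma_H$ and noting that appending a hyperedge disjoint from those already used is exactly what the nilpotent cancellation enforces; the direct expansion is cleaner.)

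\textbf{Main obstacle.} There is no real obstacle here: the entire content is the elementary observation that $\zeta_i^2=0$ converts ``the product is nonzero'' into ``the index sets are pairwise disjoint,'' which is literally the definition of a matching in $H$. The only points deserving a word of care are (a) using nonemptiness of hyperedges together with disjointness to rule out repeated edges in a surviving tuple, so the overcount factor is exactly $k!$ rather than a multinomial coefficient, and (b) confirming that the vertex set of the matching is recovered as the union of the hyperedge index sets, which is immediate from $\zeta_{F_1}\cdots\zeta_{F_k}=\zeta_{F_1\cup\cdots\cup F_k}$.
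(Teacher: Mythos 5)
Your proposal is correct and follows essentially the same route as the paper: expand ${\Gamma_H}^k$ over tuples of hyperedges, use $\zeta_i^2=0$ to kill any product of non-disjoint hyperedges, and obtain the factor $k!$ from the orderings of each surviving $k$-matching. Your write-up is simply a more explicit version of the paper's argument, spelling out why repeated hyperedges cannot occur in a surviving tuple and why the overcount is exactly $k!$.
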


\begin{proof}
By construction, $\Gamma_H$ is a sum, each term of which represents a hyperedge in $H$.  It follows that taking the $k$-th power of $\Gamma_H$ gives a sum whose terms each represent a $k$-subset of hyperedges of $H$.  The nilpotent property of the vertex labels guarantees that the only surviving terms in $\Gamma_{H}^{k}$ represent products of disjoint hyperedges, i.e. $k$-matchings in $H$.  The factor of $k!$ comes from the number of ways $\zeta_I$ appears in the product.
\end{proof}

\begin{example}
For the hypergraph in Figure \ref{first} we have \begin{equation*}
{\Gamma_H}^2 = 2 \left(\zeta_{\{1,2,3,4,6\}}+ \zeta_{\{1,2,3,5,6\}}+\zeta_{\{1,3,4,5,6\}}+\zeta_{\{1,3,4,6,7\}}+\zeta_{\{1,4,5,6,7\}} \right)
\end{equation*}
which shows there is one 2-matching on each vertex set appearing in the sum.  It is also apparent that ${\Gamma_H}^3=0$, so $H$ does not contain any 3-matchings.
\end{example}

\begin{corollary}
Let $H$ be an $r$-uniform hypergraph on $rn$ vertices and $\Gamma_H$ defined as above.  Let $\mu_H$ denote the number of perfect matchings in $H$ and let $\alpha_{[rn]}$ denote the coefficient of $\zeta_{[rn]}$ in ${\Gamma_H}^n$.  Then $$\mu_H = \frac{1}{n!} \alpha_{[rn]}.$$
\end{corollary}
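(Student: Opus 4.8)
The plan is to derive this directly from Proposition~\ref{hypergraph matchings} by specializing $k=n$ and extracting the coefficient of the top basis blade $\zeta_{[rn]}$. First I would invoke Proposition~\ref{hypergraph matchings} with $k=n$ to obtain ${\Gamma_H}^n = n!\sum_{I\in 2^{[rn]}}\alpha_I\zeta_I$, where $\alpha_I$ is the number of $n$-matchings in $H$ whose vertex set is exactly $I$. The key observation is then a counting/cardinality argument: since $H$ is $r$-uniform, every hyperedge has exactly $r$ vertices, so any collection of $n$ pairwise-disjoint hyperedges covers exactly $rn$ vertices. Hence an $n$-matching on vertex set $I$ forces $|I|=rn$, which means $I=[rn]$. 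In other words, $\alpha_I = 0$ unless $I=[rn]$, so the only $n$-matchings counted by Proposition~\ref{hypergraph matchings} at level $k=n$ are precisely the perfect matchings, and $\alpha_{[rn]}=\mu_H$ by the definition of perfect matching given just above.

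Next I would reconcile the two ways the coefficient of $\zeta_{[rn]}$ is being named. In the statement of the corollary, $\alpha_{[rn]}$ is defined as the coefficient of $\zeta_{[rn]}$ in ${\Gamma_H}^n$, i.e. the full numerical coefficient including the $n!$ factor; whereas in Proposition~\ref{hypergraph matchings} the symbol $\alpha_I$ denotes the matching count \emph{before} multiplication by $k!$. To avoid a clash I would write ${\Gamma_H}^n = \sum_{I} c_I\zeta_I$ with $c_I$ the literal coefficient, note $c_I = n!\,\alpha_I$ from the proposition, and observe $\alpha_{[rn]}$ (corollary notation) $= c_{[rn]} = n!\,\alpha_{[rn]}$ (proposition notation) $= n!\,\mu_H$. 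Solving gives $\mu_H = \tfrac{1}{n!}\alpha_{[rn]}$, which is the claim.

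The only genuine content is the uniformity-forces-full-coverage step, and even that is essentially immediate: it is the remark already recorded in the definition of perfect matching for $r$-uniform hypergraphs. So I do not expect a real obstacle here; the mild care needed is purely notational, namely being explicit that the corollary's $\alpha_{[rn]}$ already absorbs the $n!$ that appears in Proposition~\ref{hypergraph matchings}, and that a perfect matching of an $r$-uniform hypergraph on $rn$ vertices is exactly an $n$-matching. A one-line proof along the lines ``Apply Proposition~\ref{hypergraph matchings} with $k=n$; $r$-uniformity forces any $n$-matching to cover all $rn$ vertices, so the only surviving blade is $\zeta_{[rn]}$, whose coefficient is $n!\,\mu_H$; divide by $n!$'' should suffice.
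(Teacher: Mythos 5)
Your proof is correct and matches the paper's (implicit) argument: the corollary is stated without proof as an immediate consequence of Proposition~\ref{hypergraph matchings} with $k=n$, and your spelled-out version --- $r$-uniformity forces any $n$-matching to cover all $rn$ vertices, so the coefficient of $\zeta_{[rn]}$ is $n!\,\mu_H$ --- is exactly the intended reasoning. Your care with the notational clash (the corollary's $\alpha_{[rn]}$ absorbing the $n!$ from the proposition) is a correct and worthwhile clarification.
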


\subsubsection{$j$-intersecting matchings}

We can further generalize the idea of matchings in hypergraphs.  To that end, a {\em $j$-intersecting matching} in a hypergraph $H=(V,E)$ is defined to be a subset of $E$ such that the pairwise intersections of elements in the subset contain at most $j$ elements\footnote{Note that Proposition \ref{hypergraph matchings} treats the case of $0$-intersecting matchings.}.  

To find $j$-intersecting matchings in a hypergraph, we will construct an ordinary graph $G_j=(E,E_j)$ whose vertex set is the hyperedge set of $H$ . Two vertices in $G_j$ are made adjacent if and only if the corresponding hyperedges of $H$ intersect in {\bf at least} $j+1$ vertices. Now independent sets in $G_j$ correspond to $j$-intersecting matchings in $H$ and can be determined as described in the following result.

\begin{proposition}
Let $H=(V,E)$ be a hypergraph and let $G_j$ be the graph whose vertex set is the hyperedge set of $H$ with two vertices being adjacent if and only if the corresponding hyperedges of $H$ intersect in at least $j+1$ vertices.  Let $\Phi_{G_j}$ be the independent set representation of $G_j$.  Then for $k \in \mathbb{N}$ we have \begin{equation*}
{\Phi_{G_j}}^k=\sum_{I \subset E, |I|=k}\zeta_{N(I)}\varepsilon_I,
\end{equation*} where \begin{equation*}
\varepsilon_I = \prod_{i \in I}\varepsilon_i
\end{equation*} is a basis blade of $\I_{|E|}$ whose index set represents a $j$-intersecting matching on $k$ hyperedges in $H$.
\end{proposition}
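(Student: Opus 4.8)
The plan is to reduce the statement to Theorem~\ref{ind sets} (the independent-set enumeration for ordinary graphs) applied to the auxiliary graph $G_j$. First I would observe that the construction of $G_j$ is tailored so that a subset $I\subseteq E$ of hyperedges of $H$ is a $j$-intersecting matching (every pairwise intersection has at most $j$ vertices) if and only if $I$, viewed as a set of vertices of $G_j$, is an independent set of $G_j$: two hyperedges are adjacent in $G_j$ exactly when they intersect in at least $j+1$ vertices, so an independent set is precisely a family of hyperedges with all pairwise intersections of size at most $j$. This is the definitional heart of the reduction and should be stated explicitly.

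Next I would invoke Theorem~\ref{ind sets} with $G_j$ in the role of $G$; note that $G_j$ has vertex set $E$, so the relevant idem-Clifford algebra is $\I_{|E|}$ with generators $\{\varepsilon_i : i\in E\}$ labeling the hyperedges, and the zeon clique representation $\Phi_{G_j}$ is built over the edge set of $(G_j)''$ (adding loops at vertices isolated in $(G_j)'$, as in the ordinary-graph construction). Theorem~\ref{ind sets} then gives
\begin{equation*}
\Phi_{G_j}^{\,k} = k!\sum_{I\subseteq E,\ |I|=k}\zeta_{N(I)}\,\varepsilon_I,
\end{equation*}
where $N(I)$ is the set of edges of $(G_j)''$ incident to vertices of $I$, and each surviving index set $I$ is an independent set of size $k$ in $(G_j)'$ — equivalently a clique in $G_j$. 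Here I should be careful about one subtlety: the theorem as stated enumerates cliques in $G_j$ via independent sets in its complement, so I would clarify whether the "independent set representation of $G_j$" in the proposition means $\Phi_{(G_j)'}$ (the zeon clique representation of the complement, whose powers reveal independent sets of $G_j$) so that the surviving $\varepsilon_I$ correspond to independent sets of $G_j$ itself, i.e. to $j$-intersecting matchings of $H$. Matching the normalization (the statement has no $k!$, Theorem~\ref{ind sets} does) is a routine bookkeeping point — either absorb the $k!$ or present the result up to that scalar.

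With the dictionary in place, the proof is essentially a translation: combine the combinatorial equivalence from the first paragraph with the algebraic output of Theorem~\ref{ind sets}, and conclude that the index set $I$ of each basis blade $\varepsilon_I$ appearing (with nonzero coefficient) in $\Phi_{G_j}^{\,k}$ is exactly a $j$-intersecting matching on $k$ hyperedges of $H$, while $\zeta_{N(I)}$ records the incident edges of $(G_j)''$. I would also remark that the $0$-intersecting case recovers Proposition~\ref{hypergraph matchings}, consistent with the footnote.

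The main obstacle I anticipate is not depth but precision of the reduction: making sure the roles of $H$, $G_j$, its complement, and the loop-completion $(G_j)''$ are kept straight, and that the generators of $\I_{|E|}$ are correctly indexed by hyperedges rather than vertices of $H$. In particular, I would double-check the claim that $\varepsilon_I$ being a "basis blade of $\I_{|E|}$" whose index set "represents a $j$-intersecting matching on $k$ hyperedges" is compatible with Theorem~\ref{ind sets}'s assertion that the index set is an independent set of the complement; this hinges on applying Theorem~\ref{ind sets} to the graph whose complement is $G_j$. Once that alignment is fixed, there is no further calculation — the result follows immediately from the cited theorem.
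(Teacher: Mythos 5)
Your proposal is correct and follows essentially the same route as the paper: identify independent sets of $G_j$ with $j$-intersecting matchings of $H$ by construction, then invoke Theorem~\ref{ind sets} to read them off from powers of $\Phi_{G_j}$. Your side remarks about the missing $k!$ normalization and about whether the theorem is applied to $G_j$ or to the graph whose complement is $G_j$ are legitimate bookkeeping points that the paper's own one-line proof glosses over, but they do not change the argument.
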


\begin{proof}
By construction of $G_j$, independent sets in $G_j$ are subsets of hyperedges of $H$ such that the pairwise intersection of any hyperedges in the subset contains at most $j$ vertices, i.e. a $j$-intersecting matchings in $H$.  By Theorem \ref{ind sets}, $\Phi_{G_j}{}^k$ reveals the independent sets of $G_j$ of size $k$ and hence the $j$-intersecting matchings on $k$ hyperedges in $H$.
\end{proof}

\subsection{Minimal Vertex Coverings and Transversals}
\begin{definition}
Let $H = (V,E)$ be a hypergraph.  A {\em vertex cover} or {\em transversal} of $H$ is a subset $T \subseteq V$ such that for all hyperedges $e \in E$ we have $T \cap e \neq \emptyset$.  A transversal of $H$ is a minimum transversal if it is not properly contained in any other transversal of $H$.
\end{definition}

As mentioned previously, an important problem in hypergraph theory is the generation of all minimum transversals of a hypergraph $H$.  To generate minimum transversals of a hypergraph, we will construct an object which is similar to the independent set representation.  In this case, however, we will use idem-Clifford generators as hyperedge labels since we do not wish to remove any particular configurations, but instead wish to ensure that each hyperedge is covered.

Letting $\{\varepsilon_{1},\ldots, \varepsilon_{m}\}$ be generators of $\I_m$ and $\{x_1, \ldots, x_n\}$ be generators of $I_n$, it will be useful to consider the idem-Clifford algebra $\I_m\otimes\I_n$.  Note that elements of  $\I_m\otimes\I_n$ satisfy the following for $I,K\in 2^{[m]}$ and $J,L\in2^{[n]}$:
\begin{align*}
(\varepsilon_I\,x_J)(\varepsilon_K\,x_L)&=(\varepsilon_K\,x_L)(\varepsilon_I\,x_J)\\
&=\varepsilon_{I\cup K}\,x_{J\cup L}.
\end{align*}
In particular, $(\varepsilon_I\,x_J)^2=\varepsilon_I\,x_J$, and any element $u\in \I_m\otimes\I_n$ has a canonical expansion of the form \begin{equation*}
u=\sum_{I\in 2^{[m]}, J\in 2^{[n]}} u_{(I,J)}\,\varepsilon_I\,x_J,
\end{equation*}
where $u_{(I,J)}\in\mathbb{R}$ for each multi-index pair $(I,J)$.

Let $H=(V,E)$ be a hypergraph with $n$ vertices and $m$ hyperedges.  Note that any isolated vertices of $H$ cannot appear in a transversal, so we will remove and isolated vertices from the hypergraph before performing the following construction.  Letting $\{\varepsilon_{1},\ldots, \varepsilon_{m}\}$ be generators of $\I_m$ and $\{x_1, \ldots, x_n\}$ be generators of $\I_n$, consider the idem-Clifford algebra $\I_m\otimes\I_n$.  Let $\varphi: E \to \{\varepsilon_1,\varepsilon_2,\dots,\varepsilon_m\}$ be a labeling of the hyperedges and let $$\psi(v) = \prod_{\text{$e$ incident to $v$}}\varphi(e).$$  Let $\lambda: V \to \{x_1,x_2,\dots,x_n\}$  be a labeling of the vertices and define 
\begin{equation}\label{idem transversal rep}
\sigma = \sum_{v \in V}\psi(v)\lambda(v).
\end{equation}

\begin{definition}
Let $H=(V,E)$ be a hypergraph with $n$ vertices and $m$ hyperedges. The {\em idem-Clifford transversal representation of $H$} is the element $\sigma\in\I_m\otimes\I_n$ defined by \eqref{idem transversal rep}.
\end{definition}

\begin{proposition}\label{min transversals prop}
Let $H=(V,E)$ be a hypergraph with $m$ hyperedges and let $\sigma$ be the idem-Clifford transversal representation of $H$.  Then there is some smallest integer $k$ such that $\sigma^k$ contains at least one nonzero term of the form $\alpha_I\varepsilon_{[m]}x_I$ where $I$ represents the vertex set of a minimum transversal of $H$ and $|I| = k$. Each term of this form appearing in $\sigma^k$ represents a distinct minimum transversal of $H$.
\end{proposition}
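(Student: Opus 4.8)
The plan is to analyze the multinomial expansion of $\sigma^k$ and track which monomials survive, exactly as in the proof of Proposition~\ref{weak independent set enumeration}, but now using the idempotent (rather than nilpotent) behaviour of both tensor factors. First I would write
\begin{equation*}
\sigma^k=\left(\sum_{v\in V}\psi(v)\lambda(v)\right)^k=\sum_{t_1+\cdots+t_n=k}\binom{k}{t_1,\dots,t_n}\prod_{i=1}^n\bigl(\psi(v_i)\bigr)^{t_i}\lambda(v_i)^{t_i},
\end{equation*}
and observe that since each $\lambda(v_i)=x_i$ is idempotent and each $\psi(v_i)$ is a product of idempotent $\varepsilon$-generators, $\bigl(\psi(v_i)\bigr)^{t_i}\lambda(v_i)^{t_i}=\psi(v_i)\lambda(v_i)$ whenever $t_i\ge 1$. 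Hence every term with support $\{i:t_i\ge1\}=I$ collapses to a nonzero scalar multiple of $\prod_{i\in I}\psi(v_i)\,x_I=\varepsilon_{J(I)}\,x_I$, where $J(I)\subseteq[m]$ is the set of indices of hyperedges incident to at least one vertex of $I$. So $\sigma^k=\sum_{I\subseteq V,\ |I|\le k}\beta_I\,\varepsilon_{J(I)}\,x_I$ with each $\beta_I$ a strictly positive integer precisely when $|I|\le k$ (the coefficient being a sum of positive multinomial coefficients, so no cancellation can occur as all coefficients in $\sigma$ are $+1$).

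Next I would identify which of these surviving terms carry the full blade $\varepsilon_{[m]}$. By construction $J(I)=[m]$ if and only if every hyperedge $e\in E$ contains some vertex of $I$, i.e.\ if and only if $I$ is a transversal of $H$ (here I use that isolated vertices were deleted, so every remaining vertex lies in some edge and $\psi(v)\neq 1$; and every edge is nonempty). Therefore the terms of the form $\alpha_I\varepsilon_{[m]}x_I$ appearing in $\sigma^k$ are in bijection with transversals $I$ of $H$ of size $\le k$. Let $k_0=\min\{|T|:T\text{ a transversal of }H\}$. For $k<k_0$ no transversal of size $\le k$ exists, so $\sigma^k$ contains no $\varepsilon_{[m]}$ term; for $k=k_0$ the only transversals of size $\le k_0$ are the minimum transversals themselves, so $\sigma^{k_0}=\sum_{|T|=k_0,\ T\text{ transversal}}\alpha_T\,\varepsilon_{[m]}\,x_T+(\text{terms with }\varepsilon_{J}\text{, }J\subsetneq[m])$. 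This is the smallest $k$ with the stated property, and each $\varepsilon_{[m]}x_I$ term records a distinct minimum transversal since distinct vertex sets give distinct blades $x_I$.

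The one genuine subtlety — and the step I'd treat most carefully — is the word ``minimum'' in the statement's definition, which here means \emph{inclusion-minimal} (``not properly contained in any other transversal''), not \emph{minimum-cardinality}. A priori an inclusion-minimal transversal need not have the smallest cardinality, so I should be careful about what $\sigma^{k_0}$ actually certifies: it finds all transversals of \emph{smallest cardinality}, and every minimum-cardinality transversal is automatically inclusion-minimal, so the terms of $\sigma^{k_0}$ do represent inclusion-minimal transversals. If the intended claim is that \emph{all} inclusion-minimal transversals are recovered, that requires a further argument: an inclusion-minimal transversal $T$ with $|T|=k$ appears as a term $\alpha_T\varepsilon_{[m]}x_T$ in $\sigma^k$, but so do non-minimal transversals of size $k$, so one must post-process by discarding any $x_T$ term whose index set properly contains the index set of a term appearing in some lower power $\sigma^{k'}$, $k'<k$. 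I would state the proposition's conclusion precisely in the minimum-cardinality sense (which is what the clean ``smallest integer $k$'' phrasing supports) and add a remark explaining how iterating through successive powers and filtering by containment yields the full set of inclusion-minimal transversals.
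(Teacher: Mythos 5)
Your argument is correct and follows essentially the same route as the paper's proof: both track how the idem-Clifford index sets union under multiplication and identify the smallest power $k$ at which $\varepsilon_{[m]}$ appears, with your version simply making the multinomial collapse and the no-cancellation point explicit. Your closing caveat is also well taken: the paper's definition of ``minimum transversal'' is inclusion-based, but what the proof (the paper's and yours) actually certifies are the minimum-\emph{cardinality} transversals, which are inclusion-minimal but need not exhaust all inclusion-minimal transversals; recovering the latter requires the successive-powers filtering you describe.
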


\begin{proof}
Since hyperedges are nonempty, we see that the coefficients for $\sigma$ represent all hyperedges of $H$.  Note that when taking powers of $\sigma$, we obtain a polynomial whose idem-Clifford coefficients have index set equal to the union of the index sets of the idem-Clifford coefficients being multiplied.  It is evident that there is some power of $\sigma$ such that $\varepsilon_{[m]}$ appears as an idem-Clifford coefficient on some term $x_I$.  Let $k$ be the smallest such power.  By construction, terms in $\sigma^k$ having $\varepsilon_{[m]}$ as an idem-Clifford coefficient contain $x_I$ where $I$ is a set of $k$ vertices which intersect each hyperedge of $H$.
\end{proof}

\begin{example}
The idem-Clifford transversal representation for the hypergraph $H$ of Figure \ref{first} is
\begin{align*}
\sigma &= \varepsilon_{\{1,2,3\}}\,x_1+\varepsilon_{1}\,x_2 + \varepsilon_{\{1,4\}}\,x_3 + \varepsilon_{\{2,3,5\}}\,x_4\\
&\hspace{10pt} + \varepsilon_{\{3,6\}}\,x_5 + \varepsilon_{\{4,5,6\}}\,x_6 + \varepsilon_{2}\,x_7.
\end{align*}

A quick {\em Mathematica} calculation reveals \begin{align*}
\sigma^2&=2\,\varepsilon _{\{1,2\}}\,x_{\{2,7\}}+ \varepsilon _{\{1,4\}}\,x_{3}+2\,\varepsilon _{\{1,4\}}\,x_{\{2,3\}}+ \varepsilon _{\{3,6\}}\,x_{5}+\varepsilon _{\{1,2,3\}}\,x_{1}\\
&\hskip10pt+2\,\varepsilon _{\{1,2,3\}}\,x_{\{1,2\}}+2\,\varepsilon _{\{1,2,3\}}\,x_{\{1,7\}}+2\,\varepsilon _{\{1,2,4\}}\,x_{\{3,7\}}+2 \,\varepsilon _{\{1,3,6\}}\,x_{\{2,5\}}\\
&\hskip10pt+\varepsilon _{\{2,3,5\}}\,x_{4}+2 \,\varepsilon _{\{2,3,5\}}\, x_{\{4,7\}}+2 \,\varepsilon _{\{2,3,6\}}\,x_{\{5,7\}}+ \varepsilon _{\{4,5,6\}}\,x_{6}\\
&\hskip10pt+2\, \varepsilon _{\{1,2,3,4\}}\,x_{\{1,3\}}+2\, \varepsilon _{\{1,2,3,5\}}\,x_{\{1,4\}}+2\, \varepsilon _{\{1,2,3,5\}}\,x_{\{2,4\}}+2\,\varepsilon _{\{1,2,3,6\}}\,x_{\{1,5\}}\\
&\hskip10pt+2\, \varepsilon _{\{1,3,4,6\}}\,x_{\{3,5\}}+2\, \varepsilon _{\{1,4,5,6\}}\,x_{\{2,6\}}+2\, \varepsilon _{\{1,4,5,6\}}\,x_{\{3,6\}}+2\,\varepsilon _{\{2,3,5,6\}}\,x_{\{4,5\}}\\
&\hskip10pt+2\,\varepsilon _{\{2,4,5,6\}}\,x_{\{6,7\}}+2\, \varepsilon _{\{3,4,5,6\}}\,x_{\{5,6\}}+2\, \varepsilon _{\{1,2,3,4,5\}}\,x_{\{3,4\}}\\
&\hskip10pt+2\, \varepsilon _{\{2,3,4,5,6\}}\,x_{\{4,6\}}+2\,\varepsilon _{\{1,2,3,4,5,6\}}\,x_{\{1,6\}}+\varepsilon _{1}\, x_{2}+\varepsilon _{2}\,x_{7}.
\end{align*}
The presence of term $2\,\varepsilon_{\{1,2,3,4,5,6\}}\,x_{\{1,6\}}$ indicates that $\{v_1,v_6\}$ is a minimum transversal of $H$.  Further, this is the only term in $\sigma^2$ having $\varepsilon_{\{1,2,3,4,5,6\}}$ as a coefficient, so $\{v_1,v_6\}$ is the only minimum transversal of $H$.
\end{example}

In light of Proposition \ref{min transversals prop}, it is evident that the problem of finding all minimum transversals of a hypergraph $H$ is equivalent to finding the smallest $k$ such that $\sigma^k$ contains a term with an idem-Clifford coefficient whose index set is of size $m$ on at least one $x_I$.

\section{Zeon Formulations of Other Hypergraph Problems}\label{zeon formulations}

Some open problems related to hypergraphs now lead to open problems involving zeons.  The following hypergraph conjectures are attributed to Ryser~\cite{Ryser} and Frankl~\cite{Frankl}.  The interested reader can find these and other open problems at the Open Problem Garden website~\cite{openproblemgarden}.

\subsection*{Ryser's conjecture}

The following conjecture first appeared in 1971 in the Ph.D. thesis of J. R. Henderson, whose advisor was Herbert John Ryser~\cite{Henderson1971}.

\begin{conjecture*}[Ryser]
Let $H$ be an $r$-uniform $r$-partite hypergraph. If $\varkappa$ is the maximum number of pairwise disjoint hyperedges in $H$, and $\tau$ is the size of the smallest set of vertices which meets every hyperedge, then $\tau \le (r-1) \varkappa$.
\end{conjecture*}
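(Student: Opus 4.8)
The plan is \emph{not} to resolve Ryser's conjecture --- it is open for every $r\ge 4$ --- but to recast it, in the spirit of Proposition~\ref{hypergraph matchings} and Proposition~\ref{min transversals prop}, as an assertion about the arithmetic of zeon and idem-Clifford representations, and to flag which ingredients of the known partial results one would hope to carry into that algebraic setting.

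First I would translate the two quantities appearing in the statement into algebraic language. Using the zeon incidence representation $\Gamma_H=\sum_{J\in E}\zeta_J$ of Definition~\ref{zeon incidence rep}, Proposition~\ref{hypergraph matchings} shows $\Gamma_H^{\,k}\ne 0$ precisely when $H$ has a $k$-matching, so the maximum number of pairwise disjoint hyperedges is $\varkappa=\kappa(\Gamma_H)-1$, one less than the degree of nilpotency of $\Gamma_H$. Dually, forming the idem-Clifford transversal representation $\sigma\in\I_m\otimes\I_n$ of \eqref{idem transversal rep}, Proposition~\ref{min transversals prop} identifies $\tau$ with the least $k$ for which $\sigma^{\,k}$ has a term $\alpha\,\varepsilon_{[m]}\,x_I$ with $\alpha\ne 0$. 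Thus for $r$-uniform $r$-partite $H$ the conjecture is equivalent to
\[
\min\bigl\{\, k : \sigma^{\,k}\ \text{contains a term}\ \alpha\,\varepsilon_{[m]}\,x_I\ \text{with}\ \alpha\ne 0 \,\bigr\}\ \le\ (r-1)\bigl(\kappa(\Gamma_H)-1\bigr).
\]
The $r$-partite hypothesis is itself easy to encode: writing $V=V_1\sqcup\cdots\sqcup V_r$, every vertex label $x_v$ occurring in $\sigma$ lies in exactly one block $\{x_v:v\in V_t\}$, and every hyperedge blade $\zeta_J$ in $\Gamma_H$, as well as every $\varepsilon$-factor $\varphi(e)$, corresponds to an index set meeting each block in exactly one element.

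The natural attack is an induction mirroring the classical deletion argument: choose a vertex $v$ in a suitable part, split $\sigma=\psi(v)\lambda(v)+\sigma'$, note that the hyperedges through $v$ contribute precisely the $\varepsilon$-factors appearing in $\psi(v)$, and bound the least exponent needed to reach $\varepsilon_{[m]}$ in $\sigma$ by the corresponding exponent for the restriction to $H-v$ plus the resulting drop in matching number. For $r=2$ this reproduces K\"onig's theorem and can be carried out honestly within the algebra; for $r=3$ one would have to inject Aharoni's topological connectivity bound, which controls $\tau$ through the independence complex of a link hypergraph. That last step is the whole difficulty: there is at present no algebraic identity in $\Z_{\mathbf{s}}\otimes\I$ forcing the exponent inequality for general $r$, and producing one would settle a long-standing open problem. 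A realistic deliverable is therefore to prove the reformulation above is faithful, to check it against the extremal constructions (truncated projective planes, which attain equality), and to record the cases $r\le 3$ as consequences of K\"onig's and Aharoni's theorems, leaving the zeon form of Ryser's conjecture open for $r\ge 4$.
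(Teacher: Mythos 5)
This statement is an open conjecture, and the paper does not prove it either: its only ``treatment'' is to translate the two quantities into zeon language and record the result as a new conjecture. You correctly recognize this, so the honest comparison is between your reformulation and the paper's. You agree with the paper on the matching side --- both identify $\varkappa=\kappa(\Gamma_H)-1$ via Proposition~\ref{hypergraph matchings} --- but you encode $\tau$ differently. The paper stays inside the zeon algebra: it characterizes transversals as the blades $\zeta_I$ with $\zeta_I\Gamma_H=0$, collects them into $\gamma=\sum_{\{I:\,\zeta_I\Gamma_H=0\}}\zeta_I$, and reads off $\tau=\natural\gamma$ as a minimal grade, so the whole conjecture becomes a statement about a single element $\Gamma_H$, its annihilator, and its index of nilpotency. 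You instead pass to the idem-Clifford transversal representation $\sigma\in\I_m\otimes\I_n$ and read $\tau$ as the least $k$ for which $\sigma^k$ acquires a term $\alpha\,\varepsilon_{[m]}x_I$; by Proposition~\ref{min transversals prop} this is an equally faithful encoding, and it has the advantage of hooking directly into the computational machinery the paper develops for minimum transversals, at the cost of working in two algebras rather than one. Your additional remarks --- that the $r$-partite condition is expressible on the index sets, that $r=2$ and $r=3$ follow from K\H{o}nig and Aharoni, that truncated projective planes are the extremal examples, and that no algebraic identity currently forces the exponent inequality for $r\ge 4$ --- go beyond anything in the paper and are accurately flagged as unproven scaffolding rather than proof. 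In short: neither you nor the paper proves the statement (nobody can at present); your reformulation is correct but routes $\tau$ through $\sigma$ where the paper routes it through the annihilator of $\Gamma_H$.
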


In terms of the zeon incidence representation (Definition \ref{zeon incidence rep}), one sees that $\varkappa=\kappa(\Gamma)-1$.  The quantity $\tau$ is  then revealed as the minimal grade among basis blades $\zeta_I$ that ``annihilate'' $\Gamma$, i.e., blades that satisfy $\zeta_I\Gamma=0$.  Hence, the following formulation is offered.

\begin{conjecture}[Zeon formulation of Ryser's conjecture I]
Let $\Gamma$ be the zeon incidence representation of an $r$-uniform $r$-partite hypergraph, and let $\gamma$ be the zeon element defined by \begin{equation*}
\gamma=\sum_{\{I: \zeta_I\Gamma=0\}}\zeta_I.
\end{equation*}
Then,\begin{equation*} 
\natural\gamma \le (r-1) (\kappa(\Gamma)-1),
\end{equation*}
where $\natural\gamma$ denotes the minimum grade of $\gamma$ and $\kappa(\Gamma)$ is the index of nilpotency of $\Gamma$.
\end{conjecture}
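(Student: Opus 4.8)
The plan is to argue that the inequality in question is simply Ryser's conjecture transcribed into the zeon language, so that a proof reduces to verifying that two algebraic invariants of $\Gamma$ coincide with the two combinatorial parameters $\varkappa$ and $\tau$ appearing in the classical statement. Precisely, I would prove (i) $\kappa(\Gamma)-1=\varkappa$ and (ii) $\natural\gamma=\tau$, where $\tau$ is the minimum size of a transversal of the underlying $r$-uniform $r$-partite hypergraph $H$. Given (i) and (ii), the asserted bound $\natural\gamma\le (r-1)(\kappa(\Gamma)-1)$ is literally $\tau\le(r-1)\varkappa$, i.e.\ Ryser's conjecture for $H$; conversely Ryser's conjecture yields the zeon inequality, so the two statements are equivalent.

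Step (i) is immediate from Proposition~\ref{hypergraph matchings} applied to $\Gamma=\Gamma_H$: since $\Gamma^k=k!\sum_I\alpha_I\zeta_I$ with $\alpha_I$ counting the $k$-matchings of $H$ on vertex set $I$, we have $\Gamma^k\ne 0$ exactly when $H$ possesses $k$ pairwise disjoint hyperedges, hence exactly when $k\le\varkappa$. The least $k$ with $\Gamma^k=0$ is therefore $\varkappa+1$, so $\kappa(\Gamma)=\varkappa+1$.

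For step (ii), I would compute, using $\zeta_I\zeta_J=\zeta_{I\cup J}$ when $I\cap J=\varnothing$ and $\zeta_I\zeta_J=0$ otherwise, that $\zeta_I\Gamma=\sum_{J\in E,\,I\cap J=\varnothing}\zeta_{I\cup J}$. The basis blades on the right are pairwise distinct (each $J$ is recovered as $(I\cup J)\setminus I$) and every coefficient is $1$, so no cancellation can occur; hence $\zeta_I\Gamma=0$ if and only if every hyperedge of $H$ meets $I$, that is, if and only if $I$ is a transversal. Consequently $\gamma=\sum_{\{I:\zeta_I\Gamma=0\}}\zeta_I$ is the sum of the blades $\zeta_I$ over all transversals $I$, again with unit coefficients; assuming $E\ne\varnothing$ (otherwise Ryser is trivial), $\varnothing$ is not a transversal, so $\gamma$ has no scalar part and its minimal grade is $\min\{|I|:I\text{ a transversal of }H\}=\tau$, giving $\natural\gamma=\tau$.

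The real obstacle is everything that remains once this dictionary is installed: the inequality $\tau\le(r-1)\varkappa$ for $r$-uniform $r$-partite hypergraphs is exactly Ryser's conjecture, which is a theorem only for small $r$ (the case $r=3$ being the deepest known instance) and open in general, while its fractional relaxation is classical. So I do not expect to settle it; the honest target is to record the equivalence and to ask whether the algebraic encoding offers new leverage. Two heuristics I would try: bounding $\natural\gamma$ directly from the nilpotency data of $\Gamma$ and of its restrictions to the $r$ colour classes, and analysing the product $\zeta_I\,\Gamma^{\kappa(\Gamma)-1}$ to convert a maximum matching into a transversal of controlled size. Making either heuristic rigorous is the hard part, and it is precisely the difficulty that has kept Ryser's conjecture open.
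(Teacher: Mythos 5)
Your proposal is correct and matches the paper's approach: the paper offers no proof of this statement (it is presented as a conjecture precisely because, once the dictionary $\kappa(\Gamma)-1=\varkappa$ and $\natural\gamma=\tau$ is established, the inequality is exactly Ryser's open conjecture), and your verification of that dictionary is the same translation the paper states in one sentence before the conjecture. Your explicit computation of $\zeta_I\Gamma$ and the identification of annihilating blades with transversals is a sound and slightly more detailed justification of what the paper asserts without proof.
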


\subsection*{Frankl's union-closed sets conjecture}

P\'{e}ter Frankl stated the next conjecture in terms of intersection-closed set families in 1979.   

\begin{conjecture*}[Frankl]
Let $\mathcal{F}$ be a finite family of finite sets, not all empty, that is closed under taking unions.  Then there exists $x$ such that $x$ is an element of at least half the members of $\mathcal{F}$.
\end{conjecture*}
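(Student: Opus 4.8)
The final statement is Frankl's union-closed sets conjecture, one of the most notorious open problems in extremal set theory, so — exactly as with Ryser's conjecture above — the realistic goal is a faithful algebraic reformulation together with a program for attacking it, not a proof. The plan is to work in an idem-Clifford rather than a nilpotent-zeon algebra, since the relevant operation is \emph{union} rather than \emph{disjoint union}: in $\I_n$ one has $\varepsilon_F\varepsilon_G=\varepsilon_{F\cup G}$ for \emph{all} $F,G\subseteq[n]$, so blades form a semigroup under the product isomorphic to $(2^{[n]},\cup)$. Given a finite family $\mathcal{F}$ of finite sets, not all empty, put $[n]=\bigcup_{F\in\mathcal{F}}F$ and define its \emph{idem-Clifford family representation} $\Psi_{\mathcal{F}}=\sum_{F\in\mathcal{F}}\varepsilon_F\in\I_n$ (all coefficients $1$). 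The first step is the observation that $\mathcal{F}$ is union-closed precisely when the set of basis blades occurring with nonzero coefficient in $\Psi_{\mathcal{F}}^{\,2}$ coincides with that occurring in $\Psi_{\mathcal{F}}$: the inclusion $\supseteq$ is automatic from $\varepsilon_F^2=\varepsilon_F$, the coefficients of $\Psi_{\mathcal{F}}^{\,2}$ are nonnegative so there is no cancellation, and $\operatorname{supp}(\Psi_{\mathcal{F}}^{\,2})\subseteq\operatorname{supp}(\Psi_{\mathcal{F}})$ says exactly that $F\cup G\in\mathcal{F}$ whenever $F,G\in\mathcal{F}$. The hypothesis that the members are not all empty says exactly that $\Psi_{\mathcal{F}}$ is not a scalar.

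Next I would translate the conclusion. For each $i\in[n]$ let $\rho_i\colon\I_n\to\I_n$ be the algebra homomorphism determined by $\varepsilon_i\mapsto 0$ and $\varepsilon_j\mapsto\varepsilon_j$ for $j\ne i$ (well defined since $0$ is idempotent); concretely $\rho_i$ annihilates every blade containing $i$ and fixes the rest. Extending the scalar sum $\langle\langle\,\cdot\,\rangle\rangle$ of Section~\ref{prelims} to $\I_n$, one has $\langle\langle\Psi_{\mathcal{F}}\rangle\rangle=|\mathcal{F}|$ and $\langle\langle\rho_i(\Psi_{\mathcal{F}})\rangle\rangle=|\{F\in\mathcal{F}:i\notin F\}|$, so the number of members of $\mathcal{F}$ containing the element $i$ is exactly $\langle\langle\Psi_{\mathcal{F}}\rangle\rangle-\langle\langle\rho_i(\Psi_{\mathcal{F}})\rangle\rangle$. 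Since elements outside $[n]$ lie in no member, Frankl's conjecture becomes: for every $\Psi_{\mathcal{F}}$ arising as above with $\operatorname{supp}(\Psi_{\mathcal{F}}^{\,2})=\operatorname{supp}(\Psi_{\mathcal{F}})$ and $\Psi_{\mathcal{F}}$ non-scalar, $\min_{1\le i\le n}\langle\langle\rho_i(\Psi_{\mathcal{F}})\rangle\rangle\le\tfrac12\langle\langle\Psi_{\mathcal{F}}\rangle\rangle$. This is the analogue of the numbered ``Zeon formulation'' I would record, the idem-Clifford companion of the zeon formulation of Ryser's conjecture.

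For the reformulated statement itself, the most promising line of attack is to re-express the recent entropy method in these terms. If $A,B$ are independent and uniform on $\mathcal{F}$, then the coefficient of $\varepsilon_H$ in $\Psi_{\mathcal{F}}^{\,2}$ equals $|\mathcal{F}|^2\Pr[A\cup B=H]$, and union-closure forces $A\cup B$ to be supported on $\mathcal{F}$, whence $H(A\cup B)\le\log|\mathcal{F}|=H(A)$; the coordinate marginals $\Pr[i\in A]=1-\langle\langle\rho_i(\Psi_{\mathcal{F}})\rangle\rangle/\langle\langle\Psi_{\mathcal{F}}\rangle\rangle$ are precisely the abundances, so the whole argument lives inside the algebra generated by $\Psi_{\mathcal{F}}$, $\Psi_{\mathcal{F}}^{\,2}$, the projections $\rho_i$, and the coefficient multiset of $\Psi_{\mathcal{F}}^{\,2}$. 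I would try to exploit this explicit algebraic description of $\Psi_{\mathcal{F}}^{\,2}$ to tighten the step where one currently only uses an approximate form of union-closure. The main obstacle is exactly the one the field faces at present: these methods yield the constant $(3-\sqrt{5})/2\approx 0.38$ in place of $\tfrac12$, and that loss appears intrinsic to entropy-style arguments — so, as with Ryser's conjecture, I expect the deliverable here to be a clean zeon/idem-Clifford restatement rather than a resolution.
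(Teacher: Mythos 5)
You have correctly recognized that this is an open conjecture, so the realistic deliverable --- both for the paper and for you --- is a faithful algebraic reformulation rather than a proof; your reformulation is sound, but it runs on genuinely different machinery than the paper's. The paper stays with the \emph{zeon} incidence representation $\Gamma_H=\sum_{J\in E}\zeta_J$ of Definition \ref{zeon incidence rep}: it translates $\mathcal{F}$ into a hypergraph satisfying an externally stated union-closure condition $\mathfrak{F}$, and counts the members avoiding a vertex $v_i$ by nilpotent annihilation --- $\langle\langle\zeta_{i}\,\Gamma_H\rangle\rangle$ is the number of hyperedges not containing $v_i$ because $\zeta_i\zeta_J=0$ whenever $i\in J$ --- so the conjecture becomes the existence of $i$ with $\langle\langle\zeta_{i}\,\Gamma_H\rangle\rangle\le\frac12\langle\langle\Gamma_H\rangle\rangle$. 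You instead work in $\I_n$, which buys you two things the paper's version does not have: the union-closure hypothesis itself becomes an internal algebraic identity, $\operatorname{supp}(\Psi_{\mathcal{F}}^{\,2})=\operatorname{supp}(\Psi_{\mathcal{F}})$ (correct, since $\varepsilon_F\varepsilon_G=\varepsilon_{F\cup G}$ and all coefficients are nonnegative so no cancellation occurs), and the coefficients of $\Psi_{\mathcal{F}}^{\,2}$ record the full distribution of $A\cup B$ needed for the entropy approach, whereas $\Gamma_H{}^2$ annihilates every non-disjoint pair and so discards exactly the union structure Frankl's conjecture concerns. Your counting device, the evaluation homomorphism $\rho_i$, is legitimate (setting an idempotent generator to $0$ respects the relations) and is the idempotent analogue of the paper's multiplication by $\zeta_{i}$. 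One small point in each direction: the paper's formulation reuses $\Gamma_H$ and the scalar sum already in play, with no new operators; yours handles $\emptyset\in\mathcal{F}$ gracefully as $\varepsilon_\emptyset=1$, while the paper's passage to hypergraphs silently drops the empty set (hyperedges are nonempty by definition), a harmless but unremarked normalization of the count $m$.
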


In terms of hypergraphs, Frankl's conjecture is equivalent to the following.  Let $H$ be a hypergraph on $n$ vertices $V=\{v_1, \ldots, v_n\}$ and $m$ hyperedges $E=\{e_1, \ldots, e_m\}$ having the property that $e_i, e_j\in E$ implies $e_i\cup e_j\in E$.  For convenience, let such a hypergraph be said to satisfy {\em condition $\mathfrak{F}$}.  According to the conjecture, when $H$ satisfies condition $\mathfrak{F}$, there exists a vertex $v_i\in V$ such that $v_i$ is incident with at least $m/2$ hyperedges.  Equivalently, multiplication by $\zeta_{\{i\}}$ annihilates at least $m/2$ hyperedges from $\Gamma_H$.

\begin{conjecture}[Zeon formulation of Frankl's conjecture]
Let $\Gamma_H$ be the zeon incidence representation of a hypergraph $H$ satisfying condition $\mathfrak{F}$.  Then, there exists $i\in[n]$ such that \begin{equation*}
\langle\langle\zeta_{i}\,\Gamma_H\rangle\rangle\le \frac{1}{2}\langle\langle\Gamma_H\rangle\rangle.
\end{equation*}
\end{conjecture}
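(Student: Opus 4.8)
The plan is to reduce the stated inequality, by two short and unconditional computations of scalar sums, to the statement that some vertex of $H$ has degree at least $m/2$, and then to recognise that statement as the hypergraph form of Frankl's conjecture recorded just above. The only genuine obstacle is Frankl's conjecture itself.

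First I would evaluate $\langle\langle\Gamma_H\rangle\rangle$. Since $\Gamma_H=\sum_{J\in E}\zeta_J$ is a sum of distinct basis blades each with coefficient $1$, the scalar sum is $\langle\langle\Gamma_H\rangle\rangle=|E|=m$. Next I would evaluate $\langle\langle\zeta_{i}\,\Gamma_H\rangle\rangle$: we have $\zeta_{i}\,\Gamma_H=\sum_{J\in E}\zeta_{i}\zeta_J$, and the rule $\zeta_i^{2}=0$ makes $\zeta_{i}\zeta_J$ vanish exactly when $v_i\in J$, while for $v_i\notin J$ it equals the basis blade $\zeta_{J\cup\{i\}}$ with coefficient $1$. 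The map $J\mapsto J\cup\{i\}$ is injective on $\{J\in E:v_i\notin J\}$ (delete $i$ to invert it), so no collision or cancellation occurs and $\langle\langle\zeta_{i}\,\Gamma_H\rangle\rangle=|\{J\in E:v_i\notin J\}|=m-d_H(v_i)$, where $d_H(v_i)$ denotes the degree of $v_i$. Substituting both identities, the claimed inequality $\langle\langle\zeta_{i}\,\Gamma_H\rangle\rangle\le\tfrac12\langle\langle\Gamma_H\rangle\rangle$ becomes $m-d_H(v_i)\le m/2$, i.e. $d_H(v_i)\ge m/2$: the conjecture asserts exactly that some vertex of $H$ lies in at least half of the hyperedges.

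The last step is to invoke condition $\mathfrak{F}$, which says the (nonempty) hyperedge family $E$ is closed under unions; the existence of a vertex of degree at least $m/2$ is then the existence of an element contained in at least half the members of a nonempty union-closed family, which is the hypergraph reformulation of Frankl's conjecture recalled before the statement (and hence, by the equivalence noted there, Frankl's conjecture itself). So the \textbf{main obstacle} is not algebraic but is Frankl's conjecture, which is open. What the dictionary above does yield, and what I would record alongside it, is that every partial result on Frankl transfers verbatim through the two scalar-sum identities: the classical cases (ground set of bounded size, boundedly many members, a member of size one or two, various lattice hypotheses) become special cases of the conjecture, while known constant-fraction lower bounds of the form $d_H(v_i)\ge c\,m$ for an absolute constant $c>0$ become the weakened statement $\langle\langle\zeta_{i}\,\Gamma_H\rangle\rangle\le(1-c)\langle\langle\Gamma_H\rangle\rangle$. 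The one place I would look for new leverage is to re-encode $\mathfrak{F}$ in the idem-Clifford algebra, where $\varepsilon_J\varepsilon_{J'}=\varepsilon_{J\cup J'}$ turns union-closure into an algebraic closure property of the blade support of $\sum_{J\in E}\varepsilon_J$; whether this exposes structure invisible to the set-theoretic formulation is itself open, and I would not expect it to circumvent the difficulty of Frankl's conjecture.
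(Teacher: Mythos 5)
Your reduction is exactly the paper's own justification for this (unprovable-at-present) statement: the paper precedes the conjecture with the observation that a vertex incident with at least $m/2$ hyperedges is the same as $\zeta_{\{i\}}$ annihilating at least $m/2$ terms of $\Gamma_H$, which is precisely your computation $\langle\langle\Gamma_H\rangle\rangle=m$ and $\langle\langle\zeta_i\,\Gamma_H\rangle\rangle=m-d_H(v_i)$. You correctly identify that the statement is a conjecture equivalent to Frankl's open problem, not a theorem to be proved, and the paper offers nothing beyond this same equivalence.
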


\section{Conclusion and Avenues for Future Work}\label{conclusion}
As we have shown, many zeon and idem-Clifford algebraic results used to enumerate structures in graphs can be generalized to the hypergraph setting. Further, given that zeon-algebraic methods have been used in graph coloring problems \cite{StaplesStellhorn2017}, another promising avenue for future research is applying zeon and idem-Clifford algebraic techniques to hypergraph colorings.  Moreover, inverses of zeon matrices have recently been shown to enumerate paths in graphs~\cite{zeon laplacian}, opening another potential generalization to hypergraphs.

\end{document}